\documentclass[a4paper,11pt]{article}

\usepackage{amsmath}
\usepackage{amssymb}
\usepackage{amsthm}
\usepackage{amsfonts}
\usepackage{mathtools}
\usepackage{ascmac}
\usepackage{spalign}
\usepackage{fancybox} 
\usepackage[mathscr]{eucal}
\usepackage{footnpag}
\usepackage{siunitx}
\usepackage{multicol}
\usepackage{bbm}
\usepackage{framed}
\usepackage{here}

\usepackage{graphicx}
\usepackage{xcolor}
\usepackage{wrapfig}
\usepackage{float}

\theoremstyle{definition}
\newtheorem{theorem}{Theorem}[section]
\newtheorem*{theorem*}{Theorem}

\newtheorem*{definition*}{Definition}
\newtheorem{proposition}{Proposition}[section]
\newtheorem*{proposition*}{Proposition}
\newtheorem{lemma}{Lemma}[section]
\newtheorem*{lemma*}{Lemma}
\newtheorem{corollary}{Corollary}[section]
\newtheorem*{corollary*}{Corollary}
\newtheorem{remark}{Remark}[section]
\newtheorem*{remark*}{Remark}

\newfont{\bg}{cmr9 scaled\magstep4}
\newcommand{\bigzero}{
\smash{\lower1.0ex\hbox{\bg 0}}}

\newcommand{\ctext}[1]{\raise0.2ex\hbox{\textcircled{\scriptsize{#1}}}}

\title{Effective resistance and spanning trees \\in complete graphs with distance-class deletions}

\author{Shunya Tamura\thanks{Okegawa City Okegawa West Junior High School, Saitama, 363-0027, Japan, e-mail: shunya.tamura059@gmail.com}
}

\date{}
\begin{document}
\maketitle

%%%%%%%%%%%%%%%%%%%%%%%%%%%%%%%%%%%%%%%%
\begin{abstract}
In this paper, we consider circulant graphs obtained from the complete graph $K_N$ by deleting all edges belonging to a prescribed distance class.  
We study, in a unified manner, the effective resistance, the expected hitting time, the number of spanning trees, and the number of two-component spanning forests of these graphs.

For general distance-class deletions, these quantities admit natural spectral representations in terms of the Laplacian eigenvalues.  
However, such representations typically remain at the level of finite Fourier sums, and concise closed forms are not expected in general.
We focus on the case of a single deleted distance class.
When the number of vertices $N$ is odd and $\gcd(r,N)=1$, the graph $G_{N,r}$ is isomorphic to $G_{N,1}$.

In this setting, we derive explicit exponential-type formulas for the effective resistance and the number of spanning trees, and obtain corresponding closed expressions for two-component spanning forests and expected hitting times.
Our results show that the case $r=2$ is not essentially new, but follows from a general isomorphism structure underlying distance-class deletions.
We also clarify the relation of our formulas to earlier results on the complete graph with a Hamiltonian cycle removed, and provide a unified derivation within a spectral framework.
Moreover, by asymptotic analysis, we show that the ratio $\tau(G_{N,1})/\tau(K_N)$ converges to $e^{-2}$ as $N \to \infty$.
\end{abstract}

\noindent
{\bf Keywords:} Effective resistance, Spanning trees, Kirchhoff index, Circulant graphs, Distance-class deletion.

\noindent
{\bf 2020 Mathematical Subject Classification:} 05C50, 05C12, 05C30. 

%%%%%%%%%%%%%%%%%%%%%%%%%%%%%%%%%%%%%%%%
%%%%%%%%%%%%%%%%%%%%%%%%%%%%%%%%%%%%%%%%
\section{Introduction} \label{sec01}

The effective resistance on a graph is a fundamental and powerful concept that connects electrical network theory, spectral graph theory, random walk theory, and the enumeration of spanning trees \cite{Ref01, Ref03, Ref05, Ref06, Ref07}.  
By the classical commute time identity, the effective resistance is closely related to the expected hitting time of a simple random walk.  
Through Kirchhoff’s matrix--tree theorem, it is also deeply connected with the number of spanning trees and the number of two-component spanning forests \cite{Ref03, Ref08, Ref02}.  
For highly symmetric graphs, these quantities admit spectral representations in terms of Laplacian eigenvalues.  
However, examples where these representations lead to explicit and readable closed forms are rather limited \cite{Ref07, Ref09, Ref10}. 

In this paper, we consider circulant graphs obtained from the complete graph $K_N$ by deleting all edges belonging to a prescribed distance class.  
Identifying the vertex set with $\mathbb Z_N$, we denote by $G_{N,r}$ the graph obtained by removing all edges of circulant distance $\pm r$.  
These graphs form a natural family that preserves vertex transitivity while modifying only the local structure.  
They may be regarded as symmetric perturbations of the complete graph.  
In other words, we study the following question:  
how do spectral and combinatorial quantities change when one performs a minimal local modification of a graph with extremely high symmetry?

For general distance-class deletions, the Laplacian eigenvalues can be written explicitly in trigonometric form, and the effective resistance, the expected hitting time, the number of spanning trees, and the number of two-component spanning forests are naturally expressed as spectral sums.  
However, these expressions usually remain Fourier sums, and concise closed forms are not expected without additional arithmetic structure.  
In this paper, we focus on the simplest nontrivial case of a single deleted distance class.  
When $\gcd(r,N)=1$, this case is isomorphic to the case $r=1$; the cases $\gcd(r,N)>1$ and general multiple deleted distance classes are not treated here.

A closely related problem has been studied by Chair \cite{Ref09}, who derived exact expressions for two-point resistances and random-walk quantities on the complete graph with a Hamiltonian cycle removed.  
Since $G_{N,1}$ is precisely the complete graph with the edges of a Hamiltonian cycle removed, the resistance formula in Theorem \ref{thm41} is closely related to the results of \cite{Ref09}.  
Thus, the resistance formula itself should be viewed as a reformulation and streamlined derivation of known formulas within a unified spectral framework.  
The main contribution of the present paper is to connect this resistance formula systematically with spanning trees, two-component spanning forests, hitting times, and asymptotic enumeration.

Our main result shows that when $r=1$ and the number of vertices $N$ is odd, these quantities admit remarkably simple closed forms.

\medskip
\noindent
\textbf{Main Theorem A.}
Let $N \ge 5$ be odd, and let $G_{N,1}$ be the graph obtained from the complete graph $K_N$ by deleting all edges of circulant distance $\pm1$.  
For $u,v\in\mathbb Z_N$, let
\[
q=q(u,v)\equiv v-u \pmod N,\qquad q\in\{0,1,\dots,N-1\}.
\]
Put
\[
\Delta=\sqrt{N(N-4)}, 
\qquad
\rho=\frac{N-2+\Delta}{2}.
\]
Then the effective resistance is given by
\[
R^{(1)}(u,v)
=\frac{2}{\Delta(\rho^N+1)}
\left\{
\rho^N-1+(-1)^q(\rho^q-\rho^{N-q})
\right\}.
\]

The right-hand side is invariant under $q\mapsto N-q$; hence the resistance depends only on the unoriented circulant distance between $u$ and $v$
Moreover, the number of spanning trees admits the closed form
\[
\tau(G_{N,1})
=\frac{1}{N}\frac{(\rho^N+1)^2}{\rho^{N-1}(\rho+1)^2}.
\]
Using the matrix--tree theorem, the all-minors matrix--tree theorem, and the commute time identity, we also obtain explicit formulas for two-component spanning forests and expected hitting times.

The essential reason for this simplicity is that the cosine term appearing in the denominator of the eigenvalues reduces, via a complex-variable representation, to a quadratic polynomial.  
Its two roots $\rho$ and $\rho^{-1}$ allow a complete evaluation of the Fourier sums and lead to exponential-type closed expressions.

The case $r=2$ is not essentially different when $N$ is odd.  
More generally, whenever $\gcd(r,N)=1$, multiplication by $r^{-1}$ gives an automorphism of the cyclic group $\mathbb Z_N$, and this induces a graph isomorphism between $G_{N,r}$ and $G_{N,1}$.

\medskip
\noindent
\textbf{Main Theorem B.}
Let $N$ be odd and let $r$ be an integer such that $\gcd(r,N)=1$.  
Define
\[
\varphi_r(x)=r^{-1}x \pmod N.
\]
Then $\varphi_r$ defines a graph isomorphism
\[
G_{N,r}\cong G_{N,1}.
\]
Consequently,
\[
\tau(G_{N,r})=\tau(G_{N,1}),
\]
and
\[
R_{G_{N,r}}(u,v)
=
R_{G_{N,1}}(\varphi_r(u),\varphi_r(v)).
\]
In particular, the case $r=2$ is obtained as a special case.

We also derive asymptotic consequences.  
For each fixed distance independent of $N$, as $N\to\infty$ through odd integers, the resistance distance has the same order $2/N$ as that of the complete graph.  
On the other hand, the number of spanning trees satisfies
\[
\lim_{N\to\infty}
\frac{\tau(G_{N,1})}{\tau(K_N)}
=
e^{-2}.
\]
These consequences are not explicitly derived in \cite{Ref09} and arise naturally from the spectral approach developed here.

This paper is organized as follows.  
In Section \ref{sec02}, we review the basic setting of circulant graphs with distance-dependent weights and summarize known results on Laplacian eigenvalues, effective resistance, the matrix--tree theorem, and the commute time identity.  
In Section \ref{sec03}, we derive spectral representations for connected general distance-class deletion graphs and clarify the role of the deleted distance set. 

In Section \ref{sec04}, we first treat $r=1$ explicitly and then reduce all coprime single-distance deletions, including $r=2$, to this case by a graph isomorphism.
In Section \ref{sec05}, we analyze the asymptotic behavior and structural consequences of these formulas.  
Finally, Section \ref{sec06} contains concluding remarks and possible directions for further study.

%%%%%%%%%%%%%%%%%%%%%%%%%%%%%%%%%%%%%%%%
%%%%%%%%%%%%%%%%%%%%%%%%%%%%%%%%%%%%%%%%
\section{Preliminaries and Known Results} \label{sec02}

Let $N \ge 3$ be an integer, and identify the vertex set with the cyclic group
\[
V=\mathbb Z_N=\{0,1,\dots,N-1\}.
\]

For any $u,v \in V$, we define the oriented difference
\[
q(u,v) := v - u \pmod N, \quad q \in \{0,1,\dots,N-1\}.
\]
We also define the (unoriented) circulant distance by
\[
h(u,v) := \min\{q(u,v),\, N - q(u,v)\}.
\]
Note that $q(u,v)$ and $h(u,v)$ play different roles: spectral formulas are naturally expressed in terms of the oriented residue $q$, while graph-theoretic quantities such as resistance ultimately depend only on the unoriented distance $h$.

Let
\[
w:\{1,2,\dots,\lfloor N/2\rfloor\}\to \mathbb R_{\ge 0}
\]
be a distance-dependent weight function.  
We denote by $G_{N,w}$ the weighted undirected graph obtained by joining any two vertices at distance $k$ by an edge of weight $w(k)$.  

Throughout this paper, we assume that $G_{N,w}$ is connected unless otherwise stated.  
Equivalently, all nonzero Laplacian eigenvalues satisfy $\lambda_j>0$ for $1\le j\le N-1$.  
All formulas for effective resistance, spanning trees, spanning forests, and hitting times are used under this connectivity assumption.

In particular:
\begin{itemize}
\item
If $w(k)\equiv 1$, then $G_{N,w}$ coincides with the complete graph $K_N$.

\item
If $w(r)=0$ for some distance $r$, then $G_{N,w}$ is the circulant graph obtained from $K_N$ by deleting all edges of distance $r$.
\end{itemize}

Let $A$ be the weighted adjacency matrix of $G_{N,w}$ and $D$ the diagonal degree matrix.  
We define the Laplacian matrix by $L:=D-A$.  
Since $G_{N,w}$ is vertex-transitive, the weighted degree of each vertex is given by
\[
\deg
=\sum_{k=1}^{\left\lfloor \frac{N-1}{2} \right\rfloor} 2w(k)
+\mathbf{1}_{\{N\text{ even}\}}\, w\!\left(\frac{N}{2}\right),
\]
where $\mathbf{1}_{\{N\text{ even}\}}$ denotes the indicator function which equals $1$ if $N$ is even and $0$ otherwise.

It is well known that circulant matrices are diagonalized by the discrete Fourier basis \cite{Ref04}.  

%%%%%%%%%%%%%%%%%%%%%%%%%%%%%%%%%%%%%%%%
\begin{proposition} \label{prop21}
For $j=0,1,\dots,N-1$, the eigenvalues of the Laplacian matrix $L$ are given by
\[
\lambda_j
=\sum_{k=1}^{\left\lfloor \frac{N-1}{2} \right\rfloor}
2w(k)\left(1-\cos\frac{2\pi jk}{N}\right)
+\mathbf{1}_{\{N\text{ even}\}}
w\left(\frac{N}{2}\right)\left(1-\cos(\pi j)\right).
\]
\end{proposition}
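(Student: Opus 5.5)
The plan is to show that the Fourier vectors are eigenvectors of $L$ and read off the eigenvalues. For $j=0,1,\dots,N-1$ put $\omega=e^{2\pi i/N}$ and define $\psi_j\in\mathbb C^N$ by $\psi_j(x)=\omega^{jx}$ for $x\in\mathbb Z_N$. These $N$ vectors are orthogonal, hence linearly independent. It therefore suffices to prove $L\psi_j=\lambda_j\psi_j$ with $\lambda_j$ as stated; then $\{\lambda_j\}$ is the full spectrum with multiplicities.

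First I will write the Laplacian action explicitly. Since each vertex $x$ is joined to $x\pm k$ with weight $w(k)$, we have
\[
(L\psi_j)(x)=\deg\cdot\psi_j(x)-\sum_{y\neq x} A_{xy}\psi_j(y).
\]
I will split the neighbours of $x$ by distance. For $1\le k\le\lfloor (N-1)/2\rfloor$ the two vertices $x+k$ and $x-k$ are distinct, and each carries weight $w(k)$. When $N$ is even, the single vertex $x+N/2=x-N/2$ carries weight $w(N/2)$. This is the one point needing care, because the antipodal vertex must not be counted twice; this is exactly why the degree formula has the single term $\mathbf 1_{\{N\text{ even}\}}w(N/2)$.

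Substituting $\psi_j(x\pm k)=\omega^{\pm jk}\psi_j(x)$ gives
\[
\sum_{y\neq x} A_{xy}\psi_j(y)=\psi_j(x)\Big[\sum_{k=1}^{\lfloor (N-1)/2\rfloor}w(k)\big(\omega^{jk}+\omega^{-jk}\big)+\mathbf 1_{\{N\text{ even}\}}w(N/2)\,\omega^{jN/2}\Big].
\]
Using $\omega^{jk}+\omega^{-jk}=2\cos(2\pi jk/N)$ and $\omega^{jN/2}=(-1)^j=\cos(\pi j)$, the bracket equals
\[
\sum_{k} 2w(k)\cos\frac{2\pi jk}{N}+\mathbf 1_{\{N\text{ even}\}}w(N/2)\cos(\pi j).
\]
Subtracting this from the stated $\deg$ term by term yields exactly the claimed $\lambda_j$.

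Finally, independence of the $\psi_j$, which follows from the orthogonality $\sum_x\omega^{(j-l)x}=N\delta_{jl}$, shows that these $N$ values exhaust the spectrum. The main obstacle is only the bookkeeping of the even-$N$ antipodal term; the remainder is the standard circulant diagonalization of \cite{Ref04}.
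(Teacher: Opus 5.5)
Your proposal is correct and is essentially the paper's argument. The paper gives no separate proof and simply invokes the standard fact that circulant matrices are diagonalized by the discrete Fourier basis. You carry out that diagonalization explicitly, including the correct single-count treatment of the antipodal class $N/2$ when $N$ is even and the orthogonality argument that the $N$ values $\lambda_j$ exhaust the spectrum with multiplicity.
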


In particular, $\lambda_0=0$, and $G_{N,w}$ is connected if and only if $\lambda_j>0$ for $1\le j\le N-1$.

For circulant graphs, since $L$ is diagonalized by the Fourier basis, the effective resistance admits the following spectral representation.

%%%%%%%%%%%%%%%%%%%%%%%%%%%%%%%%%%%%%%%%
\begin{proposition} \label{prop22}
\begin{align}
R(u,v)
=\frac{2}{N}
\sum_{j=1}^{N-1}
\frac{1-\cos\left(2\pi j q(u,v)/N\right)}{\lambda_j},
\label{eq:resistance}
\end{align}
where $\lambda_j$ is given in Proposition \ref{prop21}.
\end{proposition}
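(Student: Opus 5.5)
The plan is to write the effective resistance through the Moore--Penrose pseudoinverse $L^{+}$ of the Laplacian and then diagonalize $L^{+}$ in the discrete Fourier basis, exactly as in Proposition \ref{prop21}. The standard identity is
\[
R(u,v)=(e_u-e_v)^{\top}L^{+}(e_u-e_v)=L^{+}_{uu}+L^{+}_{vv}-2L^{+}_{uv}.
\]
It holds for a connected graph, which we assume throughout. One can see it by solving $Lx=e_u-e_v$; this vector lies in the range of $L$, namely $\mathbf 1^{\perp}$. The potential difference is then $x_u-x_v=(e_u-e_v)^{\top}L^{+}(e_u-e_v)$.

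Next I diagonalize. Since $L$ is a real symmetric circulant matrix, the unitary vectors
\[
\phi_j(x)=\frac{1}{\sqrt N}\,e^{2\pi i jx/N},\qquad j=0,\dots,N-1,
\]
are eigenvectors with eigenvalues $\lambda_j$ as in Proposition \ref{prop21}. Here $\lambda_0=0$ corresponds to $\phi_0=\mathbf 1/\sqrt N$. Connectivity gives $\lambda_j>0$ for $j\ge1$, so
\[
L^{+}=\sum_{j=1}^{N-1}\lambda_j^{-1}\phi_j\phi_j^{*}.
\]
Substituting this into the identity gives
\[
R(u,v)=\sum_{j=1}^{N-1}\frac{|\phi_j(u)-\phi_j(v)|^2}{\lambda_j}.
\]

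The remaining step is a direct computation:
\[
|\phi_j(u)-\phi_j(v)|^2=\frac1N\bigl|1-e^{2\pi i j(v-u)/N}\bigr|^2=\frac{2}{N}\Bigl(1-\cos\frac{2\pi jq}{N}\Bigr).
\]
Only $v-u$ modulo $N$ enters, so it may be replaced by the residue $q(u,v)$. This yields \eqref{eq:resistance}.

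The only point needing care is the justification of the pseudoinverse formula: using $\mathbf 1^{\perp}$ as the range of $L$ and the fact that $\ker L=\mathrm{span}\{\mathbf 1\}$ under connectivity. I would state this briefly and cite the standard references; the rest is routine.
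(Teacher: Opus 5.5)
Your proof is correct and follows the route the paper indicates. The paper gives no detailed argument for Proposition~\ref{prop22}; it justifies it by the Fourier diagonalization of the circulant Laplacian and cites the literature on circulant networks. That is exactly the expansion $L^{+}=\sum_{j\ge1}\lambda_j^{-1}\phi_j\phi_j^{*}$ you carry out, and you fill in the standard details the paper leaves to its references: the identity $R(u,v)=(e_u-e_v)^{\top}L^{+}(e_u-e_v)$ under connectivity, and the computation $|\phi_j(u)-\phi_j(v)|^2=\tfrac{2}{N}\bigl(1-\cos(2\pi jq/N)\bigr)$.
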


This formula is naturally expressed in terms of the oriented residue $q(u,v)$; however, since the cosine function is even, the resulting resistance depends only on the unoriented distance $h(u,v)$.

Such formulas appear in the literature as resistance formulas for circulant networks \cite{Ref07}.

Let $\tau(G_{N,w})$ denote the number of weighted spanning trees of $G_{N,w}$.  
By the weighted matrix--tree theorem, we have the following.

%%%%%%%%%%%%%%%%%%%%%%%%%%%%%%%%%%%%%%%%
\begin{theorem}[G. Kirchhoff, \cite{Ref08}] \label{thm21}
\[
\tau(G_{N,w})
=\frac{1}{N}\prod_{j=1}^{N-1}\lambda_j.
\label{eq:spanningtree}
\]
\end{theorem}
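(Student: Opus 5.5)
The plan is to derive this weighted form of the matrix--tree theorem from the spectrum of $L$ given in Proposition~\ref{prop21}. The route goes through the identity that the product of the nonzero eigenvalues of $L$ equals $N$ times any cofactor.

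\textbf{Step 1: weighted matrix--tree theorem.} For any vertex $i$, let $L^{(i)}$ denote $L$ with row and column $i$ deleted. I would establish
\[
\det L^{(i)}=\sum_T\prod_{e\in T}w(e)=\tau(G_{N,w}),
\]
where the sum runs over spanning trees $T$. To do this, fix an arbitrary orientation of the edges and write $L=BWB^{\top}$, with $B$ the signed vertex--edge incidence matrix and $W$ the diagonal matrix of weights. Deleting row $i$ of $B$ gives a matrix $B_i$ with $L^{(i)}=B_iWB_i^{\top}$. By Cauchy--Binet,
\[
\det L^{(i)}=\sum_S \det(B_i[S])^2\prod_{e\in S}w(e),
\]
summing over $(N-1)$-subsets $S$ of edges. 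The key lemma is that $\det B_i[S]=\pm1$ when $S$ forms a spanning tree and $0$ otherwise. If $S$ contains a cycle, the columns are dependent. If $S$ is a tree, one can repeatedly expand along a leaf different from $i$, which gives a triangular structure. This lemma is the only real combinatorial content, and it is standard.

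\textbf{Step 2: link to the eigenvalues.} The characteristic polynomial is
\[
\det(xI-L)=x\prod_{j=1}^{N-1}(x-\lambda_j),
\]
since $\lambda_0=0$. The coefficient of $x$ in $\det(xI-L)$ is $(-1)^{N-1}$ times the sum of the principal $(N-1)\times(N-1)$ minors. Comparing this with the coefficient $(-1)^{N-1}\prod_{j\ge1}\lambda_j$ gives
\[
\sum_{i}\det L^{(i)}=\prod_{j=1}^{N-1}\lambda_j.
\]
By Step 1, every $\det L^{(i)}$ equals $\tau(G_{N,w})$, so the left side is $N\tau(G_{N,w})$. Dividing by $N$ yields the formula, with the $\lambda_j$ supplied by Proposition~\ref{prop21}.

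The vertex-transitivity of the circulant graph is not even needed here, since all cofactors agree for any graph. The only step requiring care is the unimodularity lemma in Step 1. I would prove it by induction on $N$, peeling off a leaf $\ne i$ of the tree. The corresponding row of $B_i[S]$ has a single nonzero entry $\pm1$, and expanding along it reduces to the tree with that leaf removed.
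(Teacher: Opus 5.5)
The paper does not prove this statement; it defers to the classical literature. Your argument is the standard textbook proof of the weighted matrix--tree theorem: Cauchy--Binet on $L^{(i)}=B_iWB_i^{\top}$ with the unimodularity lemma, then comparing the coefficient of $x$ in $\det(xI-L)=x\prod_{j\ge1}(x-\lambda_j)$. It is correct and complete, and it needs neither connectivity (in the disconnected case both sides are $0$) nor any special treatment of zero weights.
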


For proofs, see \cite{Ref02, Ref01, Ref08}.

Let $F(u \mid v)$ denote the number of two-component spanning forests in which $u$ and $v$ belong to distinct connected components.  

%%%%%%%%%%%%%%%%%%%%%%%%%%%%%%%%%%%%%%%%
\begin{theorem}[S. Chaiken, \cite{Ref02}] \label{thm22}
\begin{align}
F(u \mid v)=\tau(G_{N,w}) \cdot R(u,v).
\label{eq:forest}
\end{align}
\end{theorem}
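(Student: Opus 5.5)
Theorem \ref{thm22} is attributed to Chaiken as a special case of the all-minors matrix--tree theorem, so I would obtain it by combining that theorem with a linear-algebra expression for the effective resistance in terms of Laplacian minors.

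\textbf{Step 1 (forest count as a minor).} Let $L_{\{u,v\}}$ be the principal submatrix of $L$ obtained by deleting the rows and columns indexed by $u$ and $v$. The all-minors matrix--tree theorem gives, for weighted graphs,
\[
\det L_{\{u,v\}}=F(u\mid v).
\]
Here $F(u\mid v)$ is the weighted count of two-component spanning forests with $u$ and $v$ in different components. Each such forest has exactly two trees, one rooted at $u$ and one at $v$, and the product formula for rooted forests applies.

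\textbf{Step 2 (resistance as a ratio of minors).} Define $R(u,v)=(e_u-e_v)^{\top}L^{+}(e_u-e_v)$, where $L^{+}$ is the Moore--Penrose pseudoinverse. This equals the spectral sum in Proposition \ref{prop22} once $L^{+}$ is expanded in the Fourier eigenbasis. Grounding $v$, the matrix $L_{\{v\}}$ is invertible because the graph is connected, and
\[
R(u,v)=\bigl(L_{\{v\}}^{-1}\bigr)_{uu}.
\]
This is the potential at $u$ when a unit current enters at $u$ and the potential at $v$ is held at zero. Cramer's rule then gives
\[
\bigl(L_{\{v\}}^{-1}\bigr)_{uu}=\frac{\det L_{\{u,v\}}}{\det L_{\{v\}}}.
\]

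\textbf{Step 3 (combine).} By Theorem \ref{thm21}, in its cofactor form, $\det L_{\{v\}}=\tau(G_{N,w})$. Substituting this and Step 1 into Step 2 gives $R(u,v)=F(u\mid v)/\tau(G_{N,w})$, which is the identity \eqref{eq:forest}.

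\textbf{Main obstacle.} The only nontrivial point is the identification $(e_u-e_v)^{\top}L^{+}(e_u-e_v)=(L_{\{v\}}^{-1})_{uu}$. I would prove it by showing that the grounded solution $x$, extended by $x_v=0$, solves $Lx=e_u-e_v$. To check the $v$-th coordinate of this equation, I would use that every column of $L$ sums to zero. Any other solution differs from $x$ by a multiple of $\mathbf 1$, and $e_u-e_v$ is orthogonal to $\mathbf 1$, so both expressions give the same value $x_u-x_v=x_u$.
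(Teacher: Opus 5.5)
Your proposal is correct and follows essentially the route the paper indicates: the paper gives no argument beyond citing the all-minors matrix--tree theorem, and you supply the standard details. Those details are the forest count as the principal minor $\det L_{\{u,v\}}$, the grounded-Laplacian and Cramer's-rule expression $R(u,v)=\det L_{\{u,v\}}/\det L_{\{v\}}$ together with its agreement with the spectral formula of Proposition \ref{prop22}, and the cofactor form $\det L_{\{v\}}=\tau(G_{N,w})$; the only case you leave implicit is the trivial one $u=v$, where both sides vanish.
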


Theorem \ref{thm22} follows from the all-minors matrix--tree theorem or the matrix--forest theorem \cite{Ref02,Ref01}.  
In this paper, we use Theorem \ref{thm22} as a fundamental bridge in deriving closed forms.

Next, consider the simple random walk on $G_{N,w}$, where transition probabilities are normalized by weighted degrees.  
Let $H(u,v)$ denote the expected hitting time from $u$ to $v$.  

%%%%%%%%%%%%%%%%%%%%%%%%%%%%%%%%%%%%%%%%
\begin{theorem}[Chandra et al. \cite{Ref03, Ref05}] \label{thm23}
For the weighted random walk,
\[
H(u,v)+H(v,u)=\mathrm{vol}(G)\cdot R(u,v),
\]
where $\mathrm{vol}(G)=\sum_{x\in V}\deg(x)$ denotes the total weighted degree.
\end{theorem}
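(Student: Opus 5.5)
We prove the commute time identity $H(u,v)+H(v,u)=\mathrm{vol}(G)\,R(u,v)$ through the fundamental matrix of the walk. Everything is phrased in terms of the Laplacian $L=D-A$ and its Moore--Penrose pseudoinverse $L^{+}$. Recall the standard fact that $R(u,v)=(e_u-e_v)^{\top}L^{+}(e_u-e_v)$; in the circulant setting this is exactly the spectral sum of Proposition \ref{prop22}, obtained by expanding $L^{+}$ in the Fourier basis. The walk has transition matrix $P=D^{-1}A$ and stationary distribution $\pi(x)=\deg(x)/\mathrm{vol}(G)$. For our vertex-transitive $G_{N,w}$ this is uniform, but the argument does not need that.

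\emph{Step 1 (linear system for hitting times).} Fix a target $v$ and set $h_v(x)=H(x,v)$. First-step analysis gives $h_v(v)=0$ and, for $x\neq v$,
\[
h_v(x)=1+\sum_{y}P(x,y)h_v(y).
\]
Multiplying by $\deg(x)$ turns this into $(Lh_v)(x)=\deg(x)$ for $x\neq v$. At $x=v$ the value $(Lh_v)(v)$ is forced by the fact that every column of $L$ sums to zero, i.e.\ $\mathbf 1^{\top}L=0$. Together these give
\[
Lh_v=d-\mathrm{vol}(G)\,e_v,
\]
where $d$ is the degree vector. Connectivity, which is assumed throughout, implies $\ker L=\mathrm{span}(\mathbf 1)$. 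Hence $h_v=L^{+}(d-\mathrm{vol}(G)e_v)+c_v\mathbf 1$ for some constant $c_v$, and $c_v$ is fixed by the condition $h_v(v)=0$.

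\emph{Step 2 (symmetrize).} Substituting $c_v$ gives the explicit formula
\[
H(u,v)=(e_u-e_v)^{\top}L^{+}\bigl(d-\mathrm{vol}(G)e_v\bigr).
\]
Write down the analogous formula for $H(v,u)$ and add the two. The terms involving $d$ cancel, because $(e_u-e_v)^{\top}L^{+}d+(e_v-e_u)^{\top}L^{+}d=0$. What remains is
\[
\mathrm{vol}(G)\bigl[-(e_u-e_v)^{\top}L^{+}e_v+(e_u-e_v)^{\top}L^{+}e_u\bigr]=\mathrm{vol}(G)\,(e_u-e_v)^{\top}L^{+}(e_u-e_v)=\mathrm{vol}(G)\,R(u,v).
\]

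\emph{Main obstacle.} The delicate point is Step 1. One must check that $Lh_v=d-\mathrm{vol}(G)e_v$ is consistent, which holds because its right-hand side sums to zero and so lies in $\mathrm{range}(L)=\mathbf 1^{\perp}$. One must also handle the additive constant correctly, which is where connectivity enters. The final symmetrization in Step 2 is routine bookkeeping. Alternatively, one could cite the electrical-network proof of Chandra et al.: inject $\deg(x)$ units of current at each $x$, extract $\mathrm{vol}(G)$ at $v$, and superpose the reversed flow. The linear-algebra route above is more consistent with the paper's spectral framework.
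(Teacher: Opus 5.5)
Your argument is correct. The system $Lh_v=d-\mathrm{vol}(G)\,e_v$ holds, as does the use of $L^{+}$ with the constant fixed by $h_v(v)=0$, the cancellation of the $d$-terms on symmetrizing, and the identification of $(e_u-e_v)^{\top}L^{+}(e_u-e_v)$ with the spectral sum of Proposition~\ref{prop22} via the Fourier expansion of $L^{+}$. The paper gives no proof of its own and cites Chandra et al.; their electrical-network argument (inject $\deg(x)$ at each $x$, extract $\mathrm{vol}(G)$ at $v$, superpose with the reversed configuration) is exactly what your equation $Lh_v=d-\mathrm{vol}(G)\,e_v$ encodes, so your proof is essentially the cited one written in pseudoinverse language.
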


In the present setting, the symmetry $H(u,v)=H(v,u)$ follows from an explicit graph automorphism.  
Indeed, for any $u,v\in\mathbb Z_N$, the map
\[
x \mapsto u+v-x \pmod N
\]
is an automorphism of $G_{N,w}$ that interchanges $u$ and $v$.  
Hence the distributions of hitting times from $u$ to $v$ and from $v$ to $u$ coincide, and therefore $H(u,v)=H(v,u)$.

Substituting this symmetry into the commute time identity yields
\begin{equation}
H(u,v)=\frac{\mathrm{vol}(G)}{2}\,R(u,v).
\label{formula01}
\end{equation}

For general distance-dependent weights $w$ or general distance-class deletions, 
the spectral expressions in Propositions \ref{prop21} and \ref{prop22} do not usually simplify further.  
In particular, concise closed forms are not expected without additional structure.

In the following sections, we focus on the case of a single deleted distance class.  
When $\gcd(r,N)=1$, this case reduces to $r=1$ via a graph isomorphism, and we derive complete closed formulas for the effective resistance, the numbers of spanning trees and spanning forests, and the expected hitting times.

%%%%%%%%%%%%%%%%%%%%%%%%%%%%%%%%%%%%%%%%%%%%%%%%%%%%%%%%%%%%%%%%%%%%%%%%%%%%%%
%%%%%%%%%%%%%%%%%%%%%%%%%%%%%%%%%%%%%%%%%%%%%%%%%%%%%%%%%%%%%%%%%%%%%%%%%%%%%%
\section{Spectral representations for general distance-class deletions} \label{sec03}

In this section, we show that for circulant graphs with general distance-class deletions and distance-dependent weights, the effective resistance, expected hitting times, spanning tree counts, and two-component spanning forest counts are described in a unified manner by the same spectral data.  
Although the discussion does not yield closed forms in general, it provides the basis for the explicit derivations in the next section.

Let $S \subset \{1,2,\dots,\lfloor N/2\rfloor\}$ be a set of distances.  
We write $G_{N,S}$ for the circulant graph obtained from the complete graph $K_N$ by deleting all edges whose circulant distance belongs to $S$.

In the following, we assume that $G_{N,S}$ is connected.
Equivalently, the Laplacian eigenvalues satisfy
\[
\lambda_j > 0 \quad (1 \le j \le N-1).
\]
All formulas for resistance, spanning trees, spanning forests, and hitting times are used under this connectivity assumption.

Then $G_{N,S}$ can be regarded as a weighted graph $G_{N,w_S}$ with the distance-dependent weight
\[
w_S(k)=
\begin{cases}
0 & (k\in S),\\
1 & (k\notin S).
\end{cases}
\]
By Proposition \ref{prop21}, the Laplacian eigenvalues $\lambda_j$ of $G_{N,S}$ are given by
\begin{align*}
\lambda_j
&=
\sum_{\substack{1\le k\le \left\lfloor\frac{N-1}{2}\right\rfloor\\ k\notin S}}
2\left(1-\cos\frac{2\pi jk}{N}\right)
+\mathbf{1}_{\{N\text{ even}\}}\mathbf{1}_{\{N/2\notin S\}}
\left(1-\cos(\pi j)\right).
\end{align*}

On the other hand, for the complete graph $K_N$, the Laplacian eigenvalues are given by
\[
\lambda_j = N \quad (j \ne 0).
\]
Therefore, when deleting distance classes $S$, the contribution removed from the Laplacian
is not simply a cosine term, but rather
\[
2\left(1 - \cos\frac{2\pi jk}{N}\right).
\]
Hence, for odd $N$ and $S \subseteq \{1,2,\dots,(N-1)/2\}$, we obtain
\[
\lambda_j
=\sum_{\substack{1 \leq k \le (N-1)/2 \\ k \notin S}}
2\left(1 - \cos\frac{2\pi jk}{N}\right)
=N - \sum_{k \in S}2\left(1 - \cos\frac{2\pi jk}{N}\right), \quad 1 \leq j \leq N-1.
\]
Equivalently,
\[
\lambda_j
=N-2|S|+2 \sum_{k \in S} \cos\left(\frac{2\pi jk}{N}\right), \quad 1 \leq j \leq N-1.
\]

This shows that the effect of deleting distance classes is directly reflected in the Laplacian eigenvalues as a finite sum of cosine terms.  
In particular, as $|S|$ increases, the structure of $\lambda_j$ becomes a trigonometric polynomial of higher complexity.

This expression is consistent with the identity
\[
\sum_{k=1}^{(N-1)/2}
2\left(1 - \cos\frac{2\pi jk}{N}\right) = N,
\quad j \ne 0,
\]
used in Lemma \ref{lem41}.

Once the Laplacian eigenvalues $\{\lambda_j\}_{j=1}^{N-1}$ are given, the following quantities are determined solely by this spectral data:
\begin{itemize}
\item
the effective resistance
\[
R(u,v)
=\frac{2}{N}\sum_{j=1}^{N-1}\frac{1-\cos\left(2\pi j\,q(u,v)/N\right)}{\lambda_j},
\]
\item
the number of spanning trees
\[
\tau(G_{N,S})
=\frac{1}{N}\prod_{j=1}^{N-1}\lambda_j,
\]
\item
the number of two-component spanning forests
\[
F(u\mid v)=\tau(G_{N,S}) \cdot R(u,v),
\]
\item
and, by the symmetry argument in Section \ref{sec02}, the expected hitting time

\[
H(u,v)=\frac{\mathrm{vol}(G_{N,S})}{2}\,R(u,v).
\]
\end{itemize}

In this sense, for distance-class deletion graphs, the effective resistance, the spanning tree count, the spanning forest count, and the expected hitting time are described in a unified way by the same eigenvalue information.

As Proposition \ref{prop21} indicates, for a general distance set $S$, the Laplacian eigenvalues $\lambda_j$ are given as a linear combination of finitely many cosine terms.  
Since the effective resistance $R(u,v)$ is given by Proposition \ref{prop22}, Fourier analysis shows that $R(u,v)$ can be expressed as a linear combination of finitely many exponential terms.  
The number of such terms increases with $|S|$, and therefore concise closed forms are not expected in general.

The structure of the graph depends not only on the number of deleted distance classes, but also on their arithmetic properties.
In particular, the following distinctions are important:
\begin{itemize}
\item[(i)] Single deleted classes with $\gcd(r,N)=1$, which reduce to the case $r=1$ by a graph isomorphism;
\item[(ii)] Single deleted classes with $\gcd(r,N)>1$, which may exhibit different structural behavior and are not analyzed in this paper;
\item[(iii)] Multiple deleted classes $S$, considered up to multiplication by units modulo $N$;
\item[(iv)] The special even-$N$ class $N/2$, which contributes only one edge per vertex.
\end{itemize}

In this paper, we focus on the simplest case of a single deleted distance class with $\gcd(r,N)=1$, and do not attempt to treat the other cases.

In the next section, we focus on the case $r=1$, and show that in this situation the spectral expressions admit complete closed forms.  
The case $r=2$ will then be treated as a special instance of the general isomorphism described above.

We emphasize that the case gcd$(r,N)>1$ may lead to fundamentally different spectral structures,
and is left as an interesting direction for future work.
%%%%%%%%%%%%%%%%%%%%%%%%%%%%%%%%%%%%%%%%%%%%%%%%%%%%%%%%%%%%%%%%%%%%%%%%%%%%%%
%%%%%%%%%%%%%%%%%%%%%%%%%%%%%%%%%%%%%%%%%%%%%%%%%%%%%%%%%%%%%%%%%%%%%%%%%%%%%%
\section{Closed forms for distance-class deletions} \label{sec04}

Throughout this section, we assume that $N \ge 5$ is odd.  
In addition, all graphs considered in this section are assumed to be connected,
so that the spectral formulas for resistance, spanning trees,
spanning forests, and hitting times are valid.
Under this assumption, sign issues disappear and all resulting closed forms are naturally expressed as positive quantities.

%%%%%%%%%%%%%%%%%%%%%%%%%%%%%%%%%%%%%%%%%%%%%%%%%%%%%%%%%%%%%%%%%%%%%%%%%%%%%%
\subsection{The complete graph with distance-$1$ edges removed} \label{sub41}
%%%%%%%%%%%%%%%%%%%%%%%%%%%%%%%%%%%%%%%%%%%%%%%%%%%%%%%%%%%%%%%%%%%%%%%%%%%%%%

Let $G_{N,1}$ be the circulant graph obtained from the complete graph $K_N$ by deleting all edges of distance $1$ (i.e., $\pm1$).  
Equivalently, the vertex set is $\mathbb Z_N$, and each $i\in\mathbb Z_N$ is adjacent to every vertex except $i\pm 1$.  
Thus each vertex has degree $N-3$.

For $u,v \in \mathbb Z_N$, we use the notation
\[
q=q(u,v)\equiv v-u \pmod N,
\]
and
\[
h(u,v)=\min\{q,\,N-q\},
\]
as introduced in Section \ref{sec02}.

\medskip
We use the following constants throughout:
\begin{align}
\Delta:=\sqrt{N(N-4)},\qquad
\rho:=\frac{N-2+\Delta}{2}>1.
\label{eq:rhoDelta}
\end{align}
Then
\begin{equation}
\rho+\rho^{-1}=N-2,\qquad
\rho-\rho^{-1}=\Delta.
\label{eq:rho-id}
\end{equation}

%%%%%%%%%%%%%%%%%%%%%%%%%%%%%%%%%%%%%%%%
\begin{lemma} \label{lem41}
The Laplacian eigenvalues of $G_{N,1}$ are given by
\[
\lambda_0=0,\qquad
\lambda_j=(N-2)+2\cos\left(\frac{2\pi j}{N}\right)
\quad (j=1,2,\dots,N-1).
\]
\end{lemma}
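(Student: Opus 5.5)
We specialize Proposition \ref{prop21} to the weight $w=w_S$ with $S=\{1\}$. Since $N$ is odd, there is no distance-$N/2$ term, and the deleted class contributes nothing. This gives
\[
\lambda_j=\sum_{k=2}^{(N-1)/2}2\Bigl(1-\cos\frac{2\pi jk}{N}\Bigr).
\]
Because $N\ge5$, the range $2\le k\le (N-1)/2$ is nonempty. We then add and subtract the $k=1$ term, so that $\lambda_j$ equals the full sum over $1\le k\le (N-1)/2$ minus $2(1-\cos(2\pi j/N))$.

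The key step is the identity
\[
\sum_{k=1}^{(N-1)/2}2\Bigl(1-\cos\frac{2\pi jk}{N}\Bigr)=N \qquad (j\not\equiv 0 \pmod N).
\]
We prove it directly rather than merely citing it. The left side equals $(N-1)-2\sum_{k=1}^{(N-1)/2}\cos(2\pi jk/N)$. Since $k\mapsto N-k$ pairs $\{1,\dots,(N-1)/2\}$ with $\{(N+1)/2,\dots,N-1\}$ and cosine is even, we have
\[
2\sum_{k=1}^{(N-1)/2}\cos\frac{2\pi jk}{N}=\sum_{k=1}^{N-1}\cos\frac{2\pi jk}{N}=\operatorname{Re}\sum_{k=1}^{N-1}\omega^{jk}=-1,
\]
where $\omega=e^{2\pi i/N}$. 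The last equality holds because $\sum_{k=0}^{N-1}\omega^{jk}=0$ when $\omega^j\ne1$, that is, when $N\nmid j$. Hence the full sum is $(N-1)+1=N$. Equivalently, this is the statement that $K_N$ has Laplacian eigenvalue $N$ on the nonconstant Fourier modes.

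Combining the two steps gives, for $1\le j\le N-1$,
\[
\lambda_j=N-2+2\cos\frac{2\pi j}{N}.
\]
The case $j=0$ gives $\lambda_0=0$ directly, since every term $1-\cos 0$ vanishes. The only step needing care is the root-of-unity identity together with the parity bookkeeping. Oddness of $N$ ensures that the pairing $k\leftrightarrow N-k$ has no fixed point and that no $N/2$ term appears.

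As a remark, not needed for the lemma itself, these $\lambda_j$ are all positive for $N\ge5$, since $\lambda_j\ge N-4>0$. So $G_{N,1}$ is connected, which is consistent with the standing assumption of this section.
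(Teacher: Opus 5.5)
Your proof is correct and follows the paper's argument. You specialize Proposition \ref{prop21} with $w(1)=0$, add and subtract the $k=1$ term, and invoke $\sum_{k=1}^{(N-1)/2}2\bigl(1-\cos(2\pi jk/N)\bigr)=N$ for $j\neq 0$; the only difference is that the paper justifies this identity by identifying it with the nonzero Laplacian eigenvalue of $K_N$, whereas your direct root-of-unity verification makes the step self-contained.
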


\begin{proof}
Since only the distance-$1$ edges are removed, all edges of distances $k\ge 2$ are present.  
In Proposition \ref{prop21}, we put $w(1)=0$ and $w(k)=1$ for $k\ge 2$.  
Then for $j\neq 0$,
\[
\lambda_j
=
\sum_{k=2}^{(N-1)/2}
2\left(1-\cos\frac{2\pi jk}{N}\right).
\]
On the other hand, the graph having all distance-$k$ edges for $k=1,2,\dots,(N-1)/2$ is the complete graph $K_N$, and hence for $j\neq 0$,
\[
\sum_{k=1}^{(N-1)/2}2\left(1-\cos\frac{2\pi jk}{N}\right)=N.
\]
Therefore,
\begin{align*}
\lambda_j
&=\sum_{k=1}^{(N-1)/2}2\left(1-\cos\frac{2\pi jk}{N}\right)-2\left(1-\cos\frac{2\pi j}{N}\right) \\
&=N-2+2\cos\left(\frac{2\pi j}{N}\right),
\end{align*}
as claimed.

This expression is consistent with the identity
\[
\sum_{k=1}^{(N-1)/2}2\left(1-\cos\frac{2\pi jk}{N}\right)=N,
\quad j\neq 0,
\]
used in Section \ref{sec03}.

\end{proof}

%%%%%%%%%%%%%%%%%%%%%%%%%%%%%%%%%%%%%%%%
\begin{theorem} \label{thm41}
Let $N \ge 5$ be odd, and let $G_{N,1}$ be the graph obtained from $K_N$ by deleting all edges of distance $1$.  
Identify the vertex set with $\mathbb Z_N=\{0,1,\dots,N-1\}$, and for $u,v\in\mathbb Z_N$ let
\[
q=q(u,v)\equiv v-u \pmod N,\qquad q\in\{0,1,\dots,N-1\}.
\]
Then the effective resistance in $G_{N,1}$ is given by
\[
R^{(1)}(u,v)
=\frac{2}{\Delta(\rho^N+1)}
\left\{
\rho^{N}-1+(-1)^{q}(\rho^{q}-\rho^{N-q})\right\}.
\]
Moreover, the right-hand side is invariant under $q\mapsto N-q$.
Hence the resistance depends only on the unoriented circulant distance
\[
h(u,v)=\min\{q,N-q\}.
\]
If $u\neq v$, then $R^{(1)}(u,v)>0$.
\end{theorem}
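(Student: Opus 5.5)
The plan is to start from the spectral representation of Proposition \ref{prop22}, substitute the eigenvalues from Lemma \ref{lem41}, and evaluate the Fourier sum in closed form by factoring $\lambda_j$ over the roots of unity. Put $\omega=\omega_j=e^{2\pi i j/N}$.

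\textbf{Step 1: factor the eigenvalues.} By \eqref{eq:rho-id} we have $N-2=\rho+\rho^{-1}$, so
\[
\lambda_j=\rho+\rho^{-1}+\omega+\omega^{-1}=\omega^{-1}(\omega+\rho)(\omega+\rho^{-1}).
\]
Using $\rho-\rho^{-1}=\Delta$, partial fractions then give
\[
\frac{1}{\lambda_j}=\frac{1}{\Delta}\left(\frac{\rho}{\omega+\rho}-\frac{\rho^{-1}}{\omega+\rho^{-1}}\right).
\]

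\textbf{Step 2: extend the sum to $j=0$.} The expression $N-2+2\cos(2\pi j/N)$ equals $N\neq 0$ at $j=0$, so the function above is finite there. The numerator $1-\cos(2\pi jq/N)$ vanishes at $j=0$. Hence the sum in \eqref{eq:resistance} may be taken over all $N$-th roots of unity $\omega$ without changing its value. Since the $\lambda_j$ are real and the set of roots is closed under conjugation, the sum also equals
\[
\frac{2}{N}\sum_{\omega^N=1}\frac{1-\omega^q}{\lambda(\omega)}.
\]

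\textbf{Step 3: evaluate the root-of-unity sums.} For $x^N\neq 1$ and $1\le q\le N$, I will use the standard identity
\[
\sum_{\omega^N=1}\frac{\omega^q}{x-\omega}=\frac{N x^{q-1}}{x^N-1}.
\]
It can be proved by expanding $1/(x-\omega)$ geometrically (for $|x|>1$, then extending by rationality), or by partial fractions of $N x^{q-1}/(x^N-1)$. I apply it with $x=-\rho$ and $x=-\rho^{-1}$, and with exponent $q$ or $N$, the latter representing $q=0$. Because $N$ is odd, $(-a)^N-1=-(a^N+1)$. The two contributions therefore combine over the common denominator $\rho^N+1$, after multiplying the $\rho^{-1}$ term by $\rho^N/\rho^N$.

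\textbf{Step 4: collect terms.} The constant part should produce $\rho^N-1$, and the $\omega^q$ part should produce $(-1)^q(\rho^q-\rho^{N-q})$. Together with the prefactor $2/(N\Delta)$ and the factor $N$ from the identity, this yields the stated formula. The main obstacle is this bookkeeping of signs and powers, in particular tracking $(-1)^{q-1}$ versus $(-1)^{N-1}$ and confirming that the $q=0$ case gives $R=0$. I would verify the result on $N=5$ numerically as a check. If the signs do not line up, I would redo Step 3 by direct geometric-series expansion in $\omega/\rho$ and $\rho^{-1}\omega^{-1}$ (both of modulus less than $1$), which is more transparent.

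\textbf{Step 5: symmetry and positivity.} Invariance under $q\mapsto N-q$ is immediate, since $(-1)^{N-q}=-(-1)^q$ for odd $N$ and $\rho^{N-q}-\rho^q$ simply changes sign. Hence $R$ depends only on $h(u,v)$. For positivity, note that $\Delta>0$ and $\rho>1$ when $N\ge5$. For $1\le q\le N-1$ we have $|\rho^q-\rho^{N-q}|\le\rho^{N-1}-\rho<\rho^N-1$, so the bracket is positive.
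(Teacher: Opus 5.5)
Your proposal is correct and follows essentially the same route as the paper: factor $N-2+\omega+\omega^{-1}=\omega^{-1}(\omega+\rho)(\omega+\rho^{-1})$, use partial fractions, extend the sum to $j=0$, and evaluate root-of-unity sums of the form $\sum_\omega \omega^m/(\omega+x)$. Your sign bookkeeping does yield $\rho^N-1$ and $(-1)^q(\rho^q-\rho^{N-q})$, and the differences from the paper are minor: you absorb the factor $\omega$ into the partial fractions and use conjugation symmetry to replace $1-\cos$ by $1-\omega^q$, which needs fewer sums, and you prove positivity from the closed form via $|\rho^q-\rho^{N-q}|\le\rho^{N-1}-\rho<\rho^N-1$, whereas the paper uses termwise nonnegativity of the spectral sum.
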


\begin{proof}
By Proposition \ref{prop22} and Lemma \ref{lem41}, for $q\equiv v-u \pmod N$ we obtain
\begin{equation} \label{eq:Rfourier-correct-rev}
R^{(1)}(u,v)
=\frac{1}{N}\sum_{j=1}^{N-1}
\frac{2\left(1-\cos(2\pi jq/N)\right)}
{(N-2)+2\cos(2\pi j/N)}.
\end{equation}

If $q=0$, then $u=v$ and hence $R^{(1)}(u,v)=0$. 
The stated closed form also gives $0$ in this case. 
Thus, in the following, we assume $1\le q\le N-1$.

Let $z:=\omega^j$. Then
\[
\cos\!\left(\frac{2\pi j}{N}\right)=\frac12(z+z^{-1}),\qquad
\cos\!\left(\frac{2\pi jq}{N}\right)=\frac12(z^{q}+z^{-q}),
\]
and hence
\[
(N-2)+2\cos\!\left(\frac{2\pi j}{N}\right)
=\frac{z^2+(N-2)z+1}{z},
\qquad
2\left(1-\cos\!\left(\frac{2\pi jq}{N}\right)\right)
=2-(z^{q}+z^{-q}).
\]
Therefore,
\begin{align}
\frac{2\left(1-\cos(2\pi jq/N)\right)}{(N-2)+2\cos(2\pi j/N)}
&=\frac{z\left(2-(z^q+z^{-q})\right)}{z^2+(N-2)z+1}  \notag\\
&=\frac{2z-(z^{q+1}+z^{1-q})}{z^2+(N-2)z+1}.
\label{eq:term-z-rev}
\end{align}
Since $z^N=1$, we may rewrite
\[
z^{1-q}=z^{N-q+1}.
\]

Since $N\ge 5$, the discriminant of $z^2+(N-2)z+1$ is
$(N-2)^2-4=N(N-4)>0$.  

Let $\rho>1$ be the real number satisfying $\rho+\rho^{-1}=N-2$.  
Then
\[
z^2+(N-2)z+1=(z+\rho)(z+\rho^{-1}).
\]
Putting $\Delta:=\rho-\rho^{-1}$, we have the partial fraction decomposition
\begin{equation}\label{eq:pfrac-rev}
\frac{1}{(z+\rho)(z+\rho^{-1})}
=\frac{1}{\Delta}\left(\frac{1}{z+\rho^{-1}}-\frac{1}{z+\rho}\right).
\end{equation}

We next use the following finite root-of-unity sum. 
Let $N$ be odd, $\omega=e^{2\pi i/N}$, and $\rho>0$. 
For any integer $m$, let $\overline m$ be the representative of $m$ modulo $N$ in $\{0,1,\dots,N-1\}$, and define
\[
S_m(\rho):=\sum_{j=0}^{N-1}\frac{\omega^{jm}}{\rho+\omega^j}.
\]
Since $\omega^N=1$, we have $S_m(\rho)=S_{\overline m}(\rho)$. Moreover,
\[
S_m(\rho)=S_{\overline m}(\rho)
=
\begin{cases}
\displaystyle \frac{N\rho^{N-1}}{\rho^N+1}, & \overline m=0, \\[2mm]
\displaystyle -\frac{N(-1)^{\overline m}\rho^{\overline m-1}}{\rho^N+1}, & 1\le \overline m\le N-1.
\end{cases}
\]
This identity follows from the factorization
\[
x^N-1=\prod_{j=0}^{N-1}(x-\omega^j)
\]
together with the orthogonality relation
\[
\sum_{j=0}^{N-1}\omega^{j\ell}
=
\begin{cases}
N, & \ell\equiv 0 \pmod N,\\
0, & \ell\not\equiv 0 \pmod N.
\end{cases}
\]
The corresponding identity with denominator $\rho^{-1}+\omega^j$ is obtained by replacing $\rho$ with $\rho^{-1}$.

From \eqref{eq:Rfourier-correct-rev} and \eqref{eq:term-z-rev}, we have
\begin{equation}\label{eq:Rz-rev}
R^{(1)}(u,v)
=\frac{1}{N}\sum_{j=1}^{N-1}
\frac{2z-(z^{q+1}+z^{N-q+1})}{(z+\rho)(z+\rho^{-1})}.
\end{equation}
At $j=0$ (i.e., $z=1$), the numerator becomes $2-(1+1)=0$, so we may include the $j=0$ term:
\[
\sum_{j=1}^{N-1}
\frac{2z-(z^{q+1}+z^{N-q+1})}{(z+\rho)(z+\rho^{-1})}
=
\sum_{j=0}^{N-1}
\frac{2z-(z^{q+1}+z^{N-q+1})}{(z+\rho)(z+\rho^{-1})}.
\]
Substituting \eqref{eq:pfrac-rev} into \eqref{eq:Rz-rev}, we obtain
\begin{equation}\label{eq:Rsplit-rev}
R^{(1)}(u,v)
=\frac{1}{N\Delta}\sum_{j=0}^{N-1}
\left\{
\frac{2z-(z^{q+1}+z^{N-q+1})}{z+\rho^{-1}}
-
\frac{2z-(z^{q+1}+z^{N-q+1})}{z+\rho}
\right\}.
\end{equation}

We first compute the sum with denominator $z+\rho$.  
Since $z=\omega^j$,
\[
\frac{z^m}{z+\rho}=\frac{\omega^{jm}}{\rho+\omega^j}.
\]
We apply the root-of-unity identity with $m=1$, $m=q+1$, and $m=N-q+1$, reducing the exponents modulo $N$ when necessary.
Thus
\[
\sum_{j=0}^{N-1}\frac{z}{z+\rho}
=\frac{N}{\rho^N+1},
\]
\[
\sum_{j=0}^{N-1}\frac{z^{q+1}}{z+\rho}
=\frac{N(-1)^{q}\rho^{q}}{\rho^N+1},
\qquad
\sum_{j=0}^{N-1}\frac{z^{N-q+1}}{z+\rho}
=-\frac{N(-1)^{q}\rho^{N-q}}{\rho^N+1}.
\]

Therefore,
\begin{equation}\label{eq:block-rho-rev}
\sum_{j=0}^{N-1}
\frac{2z-(z^{q+1}+z^{N-q+1})}{z+\rho}
=\frac{N}{\rho^N+1}
\left\{
2-(-1)^{q}(\rho^{q}-\rho^{N-q})
\right\}.
\end{equation}

Similarly, for the sum with denominator $z+\rho^{-1}$, we obtain
\[
\sum_{j=0}^{N-1}\frac{z}{z+\rho^{-1}}
=\frac{N\rho^{N}}{\rho^N+1},
\]
\[
\sum_{j=0}^{N-1}\frac{z^{q+1}}{z+\rho^{-1}}
=\frac{N(-1)^{q}\rho^{N-q}}{\rho^N+1},
\qquad
\sum_{j=0}^{N-1}\frac{z^{N-q+1}}{z+\rho^{-1}}
=-\frac{N(-1)^{q}\rho^{q}}{\rho^N+1}.
\]
Hence
\begin{equation}\label{eq:block-rhoinv-rev}
\sum_{j=0}^{N-1}
\frac{2z-(z^{q+1}+z^{N-q+1})}{z+\rho^{-1}}
=
\frac{N}{\rho^N+1}
\left\{
2\rho^N-(-1)^{q}(\rho^{N-q}-\rho^{q})
\right\}.
\end{equation}

Substituting \eqref{eq:block-rho-rev} and \eqref{eq:block-rhoinv-rev} into \eqref{eq:Rsplit-rev}, we obtain
\begin{align*}
R^{(1)}(u,v)
&=\frac{1}{N\Delta}\left[
\frac{N}{\rho^N+1}
\left\{
2\rho^N-(-1)^{q}(\rho^{N-q}-\rho^{q})
\right\}
\right.\\
&\hspace{35mm}\left.
-
\frac{N}{\rho^N+1}
\left\{
2-(-1)^{q}(\rho^{q}-\rho^{N-q})
\right\}
\right] \\
&=\frac{2}{\Delta(\rho^N+1)}
\left\{
\rho^N-1+(-1)^{q}(\rho^{q}-\rho^{N-q})
\right\},
\end{align*}
which is the desired closed form.

Since $N$ is odd, we have $(-1)^{N-q}=-(-1)^{q}$, and thus
\[
\rho^N-1+(-1)^{N-q}\left(\rho^{N-q}-\rho^q\right)
=
\rho^N-1+(-1)^q\left(\rho^q-\rho^{N-q}\right).
\]
Hence the right-hand side is invariant under $q\mapsto N-q$.

Finally, we justify the positivity for $u\ne v$ directly from the spectral representation \eqref{eq:Rfourier-correct-rev}.
For $N\ge 5$, we have
\[
(N-2)+2\cos\!\left(\frac{2\pi j}{N}\right)\ge N-4>0
\quad (1\le j\le N-1),
\]
so all denominators are strictly positive.
Moreover, if $u\ne v$, then $q\not\equiv 0\pmod N$, and hence
\[
1-\cos\!\left(\frac{2\pi jq}{N}\right)\ge 0
\]
for all $j$, and this quantity is not identically zero as $j$ varies.
Therefore, each term in \eqref{eq:Rfourier-correct-rev} is nonnegative and at least one term is strictly positive.
Consequently,
\[
R^{(1)}(u,v)>0 \quad (u\ne v).
\]
\end{proof}

\begin{remark}
The essential point of the proof is that the denominator of the eigenvalues is a linear cosine expression
\[
(N-2)+2\cos\theta.
\]
After passing to the complex variable, this leads to the quadratic polynomial
$z^2+(N-2)z+1$ whose roots are $\rho$ and $\rho^{-1}$.  
This ``quadratic reduction'' is the source of the closed form.
\end{remark}

%%%%%%%%%%%%%%%%%%%%%%%%%%%%%%%%%%%%%%%%
\begin{theorem} \label{thm42}
Let $N\ge 5$ be odd. Then
\[
\tau(G_{N,1})
=\frac{1}{N}\frac{(\rho^{N}+1)^2}{\rho^{N-1}(\rho+1)^2}
\]
holds.
\end{theorem}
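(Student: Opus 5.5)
We combine Theorem \ref{thm21} with Lemma \ref{lem41}, which together give
\[
\tau(G_{N,1})=\frac{1}{N}\prod_{j=1}^{N-1}\lambda_j,
\qquad \lambda_j=(N-2)+2\cos\frac{2\pi j}{N}.
\]
With $z=\omega^j$ and the factorization used in the proof of Theorem \ref{thm41}, each eigenvalue becomes
\[
\lambda_j=\frac{(z+\rho)(z+\rho^{-1})}{z}.
\]
The plan is to evaluate the full product over $j=0,\dots,N-1$ using the root-of-unity factorization $x^N-1=\prod_{j}(x-\omega^j)$, and then divide out the $j=0$ factor.

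First, the product of the denominators is $\prod_{j=0}^{N-1}\omega^j=\omega^{N(N-1)/2}=1$. This holds because $N$ is odd, so $(N-1)/2$ is an integer.

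Next, since $N$ is odd, for any $a>0$
\[
\prod_{j=0}^{N-1}(a+\omega^j)=(-1)^N\prod_{j}(-a-\omega^j)=-\bigl((-a)^N-1\bigr)=a^N+1.
\]
Taking $a=\rho$ and $a=\rho^{-1}$ gives
\[
\prod_{j=0}^{N-1}(\omega^j+\rho)(\omega^j+\rho^{-1})=(\rho^N+1)(\rho^{-N}+1)=\frac{(\rho^N+1)^2}{\rho^N}.
\]

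The $j=0$ factor is $(1+\rho)(1+\rho^{-1})=(\rho+1)^2/\rho$. This equals $N$ by \eqref{eq:rho-id}, consistent with the formula $\lambda_0=N-2+2$ read off the trigonometric expression. It is nonzero, so dividing is legitimate. Therefore
\[
\prod_{j=1}^{N-1}\lambda_j=\frac{(\rho^N+1)^2}{\rho^N}\cdot\frac{\rho}{(\rho+1)^2}=\frac{(\rho^N+1)^2}{\rho^{N-1}(\rho+1)^2},
\]
and multiplying by $1/N$ gives the claim.

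The argument is short. The main points needing care are the sign bookkeeping, which relies on the parity of $N$ in the identity $\prod(a+\omega^j)=a^N+1$ and in $\prod\omega^j=1$, and the observation that the $j=0$ factor must be computed from the factored form rather than from the Laplacian eigenvalue $\lambda_0=0$. Since $(\rho+1)^2/\rho=N$, one could also simplify to $\tau=(\rho^N+1)^2/(N^2\rho^N)$ as a sanity check, and verify small cases such as $N=5$, where $G_{5,1}\cong C_5$ and the formula should give $\tau=5$.
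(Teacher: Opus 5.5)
Your proof is correct and follows essentially the same route as the paper: factor $\lambda_j=(\omega^j+\rho)(\omega^j+\rho^{-1})/\omega^j$, use $\prod_j\omega^{j}=1$ for odd $N$, and evaluate the root-of-unity products via $x^N-1=\prod_j(x-\omega^j)$. The only cosmetic difference is that you take the full product over $j=0,\dots,N-1$ and then divide by the $j=0$ factor $(\rho+1)^2/\rho=N$, whereas the paper quotes $\prod_{j=1}^{N-1}(\rho+\omega^j)=(\rho^N+1)/(\rho+1)$ directly; these amount to the same computation.
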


\begin{proof}
By Theorem \ref{thm21} and Lemma \ref{lem41},
\[
\tau(G_{N,1})=\frac{1}{N}\prod_{j=1}^{N-1}\lambda_j
=\frac{1}{N}\prod_{j=1}^{N-1}\left((N-2)+\omega^j+\omega^{-j}\right).
\]
Note that
\[
(N-2)+\omega^j+\omega^{-j}
=\frac{\omega^{2j}+(N-2)\omega^j+1}{\omega^j}
=\frac{(\omega^j+\rho)(\omega^j+\rho^{-1})}{\omega^j}.
\]
Hence
\[
\prod_{j=1}^{N-1}\lambda_j
=
\left(\prod_{j=1}^{N-1}(\omega^j+\rho)\right)
\left(\prod_{j=1}^{N-1}(\omega^j+\rho^{-1})\right)
\left(\prod_{j=1}^{N-1}\omega^{-j}\right).
\]
If $N$ is odd, then
\[
\prod_{j=1}^{N-1}\omega^{j}=\omega^{\frac{N(N-1)}{2}}=e^{\pi i (N-1)}=1,
\]
and thus $\prod_{j=1}^{N-1}\omega^{-j}=1$.
Moreover,
\[
\prod_{j=1}^{N-1}(\rho+\omega^j)=\frac{\rho^N+1}{\rho+1}.
\]
Similarly,
\[
\prod_{j=1}^{N-1}(\rho^{-1}+\omega^j)=\frac{\rho^{-N}+1}{\rho^{-1}+1}
=\frac{\rho(\rho^{-N}+1)}{\rho+1}.
\]
Multiplying these identities yields
\[
\prod_{j=1}^{N-1}\lambda_j
=\frac{\rho^N+1}{\rho+1}\cdot \frac{\rho(\rho^{-N}+1)}{\rho+1}
=\frac{(\rho^N+1)^2}{\rho^{N-1}(\rho+1)^2}.
\]
Finally, using $\tau(G_{N,1})=\frac{1}{N}\prod_{j=1}^{N-1}\lambda_j$ completes the proof.
\end{proof}

%%%%%%%%%%%%%%%%%%%%%%%%%%%%%%%%%%%%%%%%
\begin{corollary} \label{cor41}
For distinct vertices $u$ and $v$,
\[
F^{(1)}(u \mid v)
=\frac{2(\rho^N+1)}{\Delta N\rho^{N-1}(\rho+1)^2}
\left\{\rho^N-1+(-1)^{q}(\rho^{q}-\rho^{N-q})\right\}.
\]
\end{corollary}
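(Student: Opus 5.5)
The corollary follows by direct substitution into Chaiken's identity, Theorem \ref{thm22}, which gives $F(u\mid v)=\tau(G_{N,w})\cdot R(u,v)$ for any connected circulant graph of the type considered. Here $G_{N,1}$ is connected for odd $N\ge 5$. Indeed, by Lemma \ref{lem41} every nonzero eigenvalue satisfies $\lambda_j=(N-2)+2\cos(2\pi j/N)\ge N-4>0$, so the hypotheses of Theorem \ref{thm22} hold. I will therefore take the closed form of $\tau(G_{N,1})$ from Theorem \ref{thm42} and the closed form of $R^{(1)}(u,v)$ from Theorem \ref{thm41}, and multiply them.

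Concretely, the product is
\[
\frac{1}{N}\frac{(\rho^N+1)^2}{\rho^{N-1}(\rho+1)^2}\cdot\frac{2}{\Delta(\rho^N+1)}\left\{\rho^N-1+(-1)^q(\rho^q-\rho^{N-q})\right\}.
\]
One factor $(\rho^N+1)$ cancels, and $\rho^N+1>0$ since $\rho>1$. What remains is
\[
\frac{2(\rho^N+1)}{\Delta N\rho^{N-1}(\rho+1)^2}\left\{\rho^N-1+(-1)^{q}(\rho^{q}-\rho^{N-q})\right\},
\]
which is the stated formula. The hypothesis that $u$ and $v$ are distinct ensures $1\le q\le N-1$. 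This is where two-component forests separating $u$ and $v$ make sense, and by Theorem \ref{thm41} the value is then strictly positive.

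There is no real obstacle beyond the algebraic cancellation. The only point to check is that the conventions agree: $q$ in Theorem \ref{thm41} is the same oriented residue $q(u,v)$ used in the corollary, and the constants $\rho$ and $\Delta$ are those fixed in \eqref{eq:rhoDelta}. As a by-product, the invariance under $q\mapsto N-q$ carries over from Theorem \ref{thm41}, so $F^{(1)}(u\mid v)$ depends only on $h(u,v)$.
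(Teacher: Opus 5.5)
Your proposal is correct and is the paper's proof: substitute $\tau(G_{N,1})$ from Theorem~\ref{thm42} and $R^{(1)}(u,v)$ from Theorem~\ref{thm41} into Chaiken's identity $F(u\mid v)=\tau(G)\,R(u,v)$, then cancel one factor of $\rho^N+1$. Your explicit connectivity check via $\lambda_j\ge N-4>0$ is a harmless addition; the paper leaves this to its standing connectivity assumption in Section~\ref{sec04}.
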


\begin{proof}
This follows from Theorem \ref{thm22}, which states that
\[
F(u \mid v)=\tau(G)\,R(u,v).
\]
Substituting the explicit expressions for $\tau(G_{N,1})$ from Theorem \ref{thm42}
and $R^{(1)}(u,v)$ from Theorem \ref{thm41} yields the result.
\end{proof}

%%%%%%%%%%%%%%%%%%%%%%%%%%%%%%%%%%%%%%%%
\begin{corollary} \label{cor42}
The number of edges of $G_{N,1}$ is $m=\frac{N(N-3)}{2}$.  
Consequently, the expected hitting time satisfies
\[
H^{(1)}(u,v)
=\frac{N(N-3)}{\Delta(\rho^{N}+1)}
\left\{\rho^N-1+(-1)^{q}(\rho^{q}-\rho^{N-q})\right\}.
\]
\end{corollary}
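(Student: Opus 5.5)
The plan is to combine the edge count with the symmetric form \eqref{formula01} of the commute time identity and the closed form for the resistance in Theorem \ref{thm41}.

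\textbf{Step 1: the edge count.} By the discussion preceding Lemma \ref{lem41}, each vertex of $G_{N,1}$ is adjacent to every vertex except itself and $i\pm1$. For $N\ge5$ odd these three vertices are distinct, so the graph is $(N-3)$-regular. The handshake lemma then gives
\[
2m=\sum_{x\in\mathbb Z_N}\deg(x)=N(N-3),
\]
so $m=N(N-3)/2$ and $\mathrm{vol}(G_{N,1})=2m=N(N-3)$.

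\textbf{Step 2: the hitting time.} The automorphism $x\mapsto u+v-x$ from Section \ref{sec02} preserves circulant distance and hence $G_{N,1}$. By \eqref{formula01} this yields $H(u,v)=\tfrac{\mathrm{vol}(G)}{2}R(u,v)$, that is,
\[
H^{(1)}(u,v)=\frac{N(N-3)}{2}R^{(1)}(u,v).
\]
Substituting the expression from Theorem \ref{thm41}, the factor $\frac{N(N-3)}{2}$ cancels the $2$ in $\frac{2}{\Delta(\rho^N+1)}$. This leaves exactly
\[
\frac{N(N-3)}{\Delta(\rho^N+1)}\left\{\rho^N-1+(-1)^q(\rho^q-\rho^{N-q})\right\}.
\]

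There is no real obstacle here. The only point needing care is confirming that the graph is connected, so that the commute time identity applies. This follows from the positivity of the $\lambda_j$ in Lemma \ref{lem41}, since $\lambda_j\ge N-4>0$. For $u=v$ both sides vanish, so the formula also holds trivially in that case.
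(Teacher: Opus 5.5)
Your proposal is correct and follows the paper's proof: $(N-3)$-regularity gives $\mathrm{vol}(G_{N,1})=N(N-3)$, the symmetry $H(u,v)=H(v,u)$ turns the commute time identity into $H=\tfrac{\mathrm{vol}}{2}R$, and substituting Theorem \ref{thm41} finishes it. Your checks of connectivity via $\lambda_j\ge N-4>0$ and of the trivial case $u=v$ are small additions the paper leaves implicit.
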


\begin{proof}
Since $G_{N,1}$ is $(N-3)$-regular, we have
\[
\mathrm{vol}(G_{N,1})=N(N-3).
\]
By the result in Section \ref{sec02}, the symmetry $H(u,v)=H(v,u)$ holds,
and hence
\[
H(u,v)=\frac{\mathrm{vol}(G_{N,1})}{2}\,R(u,v).
\]
Substituting the explicit expression for $R^{(1)}(u,v)$ from Theorem \ref{thm41}
yields the desired formula.
\end{proof}

%%%%%%%%%%%%%%%%%%%%%%%%%%%%%%%%%%%%%%%%%%%%%%%%%%%%%%%%%%%%%%%%%%%%%%%%%%%%%%
\subsection{The complete graph with distance-$2$ edges removed} \label{sub42}
%%%%%%%%%%%%%%%%%%%%%%%%%%%%%%%%%%%%%%%%%%%%%%%%%%%%%%%%%%%%%%%%%%%%%%%%%%%%%%

We again assume that $N \ge 5$ is odd.  
Let $G_{N,2}$ be the circulant graph obtained from the complete graph $K_N$ by deleting all edges of distance $2$.

The case $r=2$ is a special instance of a more general phenomenon.
When $\gcd(r,N)=1$, multiplication by $r^{-1}$ defines an automorphism
of the cyclic group $\mathbb Z_N$, which induces a graph isomorphism
\[
G_{N,r} \cong G_{N,1}.
\]

In particular, since $N$ is odd, we have $\gcd(2,N)=1$, and hence
\[
G_{N,2} \cong G_{N,1}.
\]

Let $\varphi_2(x)=2^{-1}x \pmod N$ denote the isomorphism.
Then for $u,v\in\mathbb Z_N$,
\[
R_{G_{N,2}}(u,v)=R_{G_{N,1}}(\varphi_2(u),\varphi_2(v)),
\quad
\tau(G_{N,2})=\tau(G_{N,1}).
\]

%%%%%%%%%%%%%%%%%%%%%%%%%%%%%%%%%%%%%%%%
\begin{theorem} \label{thm43}
Let $N \ge 5$ be odd. 
Let $q = v-u \pmod N$, and let $s = 2^{-1} \pmod N$.
Define
\[
\delta_2(q)=\min\{sq \bmod N,\; N-(sq \bmod N)\}.
\]
Then
\[
R^{(2)}(u,v)=R^{(1)}_{\delta_2(q)}.
\]
Equivalently, using Theorem \ref{thm41}, this is written explicitly as
\[
R^{(2)}(u,v)
=
\frac{2}{\Delta(\rho^N+1)}
\left\{
\rho^N-1
+
(-1)^{\delta_2(q)}
\left(
\rho^{\delta_2(q)}
-
\rho^{N-\delta_2(q)}
\right)
\right\}.
\]
\end{theorem}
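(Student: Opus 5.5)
The plan is to deduce the result from Theorem \ref{thm41} by transporting it along an explicit graph isomorphism, as in Main Theorem B. Since $N$ is odd, $\gcd(2,N)=1$, so $s=2^{-1}\bmod N$ exists and $\varphi_2(x)=sx \pmod N$ is a bijection of $\mathbb Z_N$. First I will check that $\varphi_2$ maps $G_{N,2}$ onto $G_{N,1}$. Two vertices $x,y$ are adjacent in $G_{N,2}$ exactly when $y-x\not\equiv \pm 2 \pmod N$. Also $y-x\equiv\pm2$ holds if and only if $s(y-x)\equiv \pm 1$, because multiplication by $s$ is invertible and sends $\pm2$ to $\pm1$. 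Hence $\varphi_2(x)$ and $\varphi_2(y)$ are adjacent in $G_{N,1}$ exactly when $x,y$ are adjacent in $G_{N,2}$, so $\varphi_2$ is an isomorphism of graphs with unit weights.

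Next I will use the fact that effective resistance is an isomorphism invariant. The Laplacians satisfy $L_{G_{N,2}}=P^{\top}L_{G_{N,1}}P$ for the permutation matrix $P$ of $\varphi_2$, and resistance can be written as $(e_u-e_v)^{\top}L^{+}(e_u-e_v)$. Therefore $R^{(2)}(u,v)=R^{(1)}(\varphi_2(u),\varphi_2(v))$. The oriented difference of the image pair is $\varphi_2(v)-\varphi_2(u)\equiv s(v-u)\equiv sq \pmod N$, so its residue is $q'=sq\bmod N$.

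Finally, I will apply Theorem \ref{thm41} with $q'$ in place of $q$. By the invariance $q'\mapsto N-q'$ stated there, the value depends only on $\min\{q',N-q'\}=\delta_2(q)$. This gives $R^{(2)}(u,v)=R^{(1)}_{\delta_2(q)}$ together with the explicit formula. The notation $R^{(1)}_h$ means the common value of $R^{(1)}$ at oriented residue $h$ (equivalently $N-h$). The case $q=0$ gives $\delta_2(0)=0$, and both sides vanish.

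There is no real obstacle. The only point needing care is the final step: the closed form of Theorem \ref{thm41} is stated for the oriented residue and carries the sign $(-1)^{q}$, so substituting the unoriented $\delta_2(q)$ is legitimate only because of the proven $q\mapsto N-q$ symmetry. That symmetry rests on $N$ being odd, which makes $(-1)^{N-q}=-(-1)^q$.
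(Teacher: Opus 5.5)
Your proposal is correct and follows the paper's proof. Both transport the resistance along the isomorphism $\varphi_2(x)=sx \bmod N$, observe that the image pair has oriented residue $sq \bmod N$, and substitute into Theorem~\ref{thm41}. You also fill in three points the paper leaves implicit: the check that $\varphi_2$ maps distance-$2$ non-edges to distance-$1$ non-edges, the invariance of resistance via $L^{+}$, and the fact that substituting the unoriented $\delta_2(q)$ into the oriented formula relies on the $q\mapsto N-q$ symmetry.
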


\begin{proof}
By the graph isomorphism $G_{N,2} \cong G_{N,1}$ induced by
$\varphi_2(x)=2^{-1}x$, we have
\[
R^{(2)}(u,v)=R^{(1)}(\varphi_2(u),\varphi_2(v)).
\]

Since $\varphi_2(v)-\varphi_2(u) \equiv s(v-u) \equiv sq \pmod N$,
the corresponding distance in $G_{N,1}$ is given by
\[
\delta_2(q)=\min\{sq \bmod N,\; N-(sq \bmod N)\}.
\]

Therefore,
\[
R^{(2)}(u,v)=R^{(1)}_{\delta_2(q)}.
\]
The explicit formula follows by substituting $\delta_2(q)$ into
Theorem \ref{thm41}.
\end{proof}
%%%%%%%%%%%%%%%%%%%%%%%%%%%%%%%%%%%%%%%%
\begin{theorem} \label{thm44}
Let $N \ge 5$ be odd. Then
\[
\tau(G_{N,2})=\tau(G_{N,1}).
\]
\end{theorem}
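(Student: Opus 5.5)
The plan is to prove Theorem \ref{thm44} directly from the graph isomorphism $G_{N,2}\cong G_{N,1}$, with an independent spectral check as backup. Since $N$ is odd, $\gcd(2,N)=1$, so $2$ is invertible modulo $N$. Put $s=2^{-1}\pmod N$ and $\varphi_2(x)=sx \pmod N$. This map is a bijection of $\mathbb Z_N$ because it has the inverse $x\mapsto 2x$.

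The first step is to check that $\varphi_2$ preserves adjacency and non-adjacency. Two distinct vertices $x,y$ are adjacent in $G_{N,2}$ exactly when $y-x\not\equiv \pm 2 \pmod N$. Multiplying by $s$, the condition $y-x\equiv\pm2$ is equivalent to $\varphi_2(y)-\varphi_2(x)\equiv \pm 1$. Hence $x\sim y$ in $G_{N,2}$ if and only if $\varphi_2(x)\sim\varphi_2(y)$ in $G_{N,1}$. The same map restricted to $\mathbb Z_N$ is a unit multiplication, so it also carries the distance classes $\{2,N-2\}$ onto $\{1,N-1\}$ and no edges collapse. The number of spanning trees is a graph invariant, because an isomorphism maps spanning trees bijectively to spanning trees. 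This gives $\tau(G_{N,2})=\tau(G_{N,1})$.

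As a cross-check, I would also verify the identity spectrally, using Theorem \ref{thm21} and the formula $\lambda_j=N-2+2\cos(2\pi jk/N)$ from Section \ref{sec03} with $S=\{k\}$. For $k=2$ the eigenvalues are $N-2+2\cos(2\pi\cdot 2j/N)$ for $j=1,\dots,N-1$. The map $j\mapsto 2j \bmod N$ permutes $\{1,\dots,N-1\}$ since $N$ is odd. Therefore the multiset of nonzero eigenvalues coincides with that of Lemma \ref{lem41}, and the two products in Theorem \ref{thm21} agree. It follows that both quantities equal the closed form of Theorem \ref{thm42}.

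No step is a real obstacle. The only point needing care is that the residue $2$ is a unit modulo $N$, which is exactly where oddness of $N$ is used; both arguments rely only on this fact.
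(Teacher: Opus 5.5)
Your proof is correct and takes the same route as the paper, which proves Theorem~\ref{thm44} in one line from the isomorphism $G_{N,2}\cong G_{N,1}$ induced by $\varphi_2(x)=2^{-1}x \pmod N$; you additionally supply the adjacency check that the paper leaves implicit. Your spectral cross-check is also valid: the permutation $j\mapsto 2j \bmod N$ of $\{1,\dots,N-1\}$ identifies the two eigenvalue multisets, which is the eigenvalue-permutation viewpoint that the paper's subsequent remark sets against the structural isomorphism.
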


\begin{proof}
This follows immediately from the graph isomorphism
$G_{N,2} \cong G_{N,1}$.
\end{proof}

%%%%%%%%%%%%%%%%%%%%%%%%%%%%%%%%%%%%%%%%
\begin{corollary} \label{cor43}
For distinct vertices $u$ and $v$,
\[
F^{(2)}(u \mid v)=\tau(G_{N,2})\,R^{(2)}(u,v).
\]

In particular,
\[
F^{(2)}(u \mid v)=\tau(G_{N,1})\,R^{(1)}_{\delta_2(q)}.
\]

Using Theorems \ref{thm41} and \ref{thm42}, this admits the explicit form
\[
F^{(2)}(u \mid v)
=
\frac{2(\rho^N+1)}{\Delta N\rho^{N-1}(\rho+1)^2}
\left\{
\rho^N-1
+
(-1)^{\delta_2(q)}
\left(
\rho^{\delta_2(q)}-\rho^{N-\delta_2(q)}
\right)
\right\}.
\]
\end{corollary}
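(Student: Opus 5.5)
The plan is to obtain Corollary \ref{cor43} in three short steps, mirroring the proof of Corollary \ref{cor41}.

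\emph{Step 1.} I will apply Theorem \ref{thm22} directly to $G_{N,2}$. This graph is the weighted circulant $G_{N,w}$ with $w(2)=0$ and $w(k)=1$ otherwise. By the isomorphism with $G_{N,1}$ (or directly from Lemma \ref{lem41}'s counterpart, where $\lambda_j=N-2+2\cos(4\pi j/N)\ge N-4>0$), it is connected. Hence Chaiken's identity gives
\[
F^{(2)}(u\mid v)=\tau(G_{N,2})\,R^{(2)}(u,v).
\]

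\emph{Step 2.} I will substitute Theorem \ref{thm44}, $\tau(G_{N,2})=\tau(G_{N,1})$, and Theorem \ref{thm43}, $R^{(2)}(u,v)=R^{(1)}_{\delta_2(q)}$. This yields the second displayed identity.

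As a sanity check, the forest count can also be transported directly. The isomorphism $\varphi_2$ maps two-component spanning forests of $G_{N,2}$ separating $u,v$ bijectively onto those of $G_{N,1}$ separating $\varphi_2(u),\varphi_2(v)$. Their oriented difference is $sq$ with $sq\not\equiv0$, since $u\neq v$ and $s$ is a unit.

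\emph{Step 3.} I will multiply the closed form of Theorem \ref{thm42} by that of Theorem \ref{thm41} evaluated at $q'=\delta_2(q)$. This is legitimate because Theorem \ref{thm41} is stated for an arbitrary residue $q'\in\{0,\dots,N-1\}$, and $\delta_2(q)$ lies in $\{1,\dots,(N-1)/2\}$. One factor of $(\rho^N+1)$ cancels:
\[
\frac{1}{N}\frac{(\rho^N+1)^2}{\rho^{N-1}(\rho+1)^2}\cdot\frac{2}{\Delta(\rho^N+1)}\{\cdots\}
=\frac{2(\rho^N+1)}{\Delta N\rho^{N-1}(\rho+1)^2}\{\cdots\}.
\]
This is the stated expression.

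There is no real obstacle here. The only point needing care is that Theorem \ref{thm41}'s formula is evaluated at the transported residue $\delta_2(q)$ rather than at $q$. This is justified because Theorem \ref{thm41} shows the formula is invariant under $q\mapsto N-q$, so using $\min\{sq \bmod N,\,N-(sq\bmod N)\}$ in place of $sq\bmod N$ does not change the value.
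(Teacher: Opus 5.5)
Your proposal is correct and follows the same route the paper takes. The paper states this corollary without a separate proof, in exact parallel with Corollary \ref{cor41}: apply Theorem \ref{thm22} to $G_{N,2}$, substitute Theorems \ref{thm43} and \ref{thm44}, then multiply the closed forms of Theorems \ref{thm41} and \ref{thm42} and cancel one factor of $\rho^N+1$. Your added justifications are also correct and make explicit what the paper leaves unstated: connectivity via $\lambda_j=N-2+2\cos(4\pi j/N)\ge N-4>0$, and invariance under $q\mapsto N-q$ to justify evaluating at $\delta_2(q)$ rather than $sq \bmod N$.
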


%%%%%%%%%%%%%%%%%%%%%%%%%%%%%%%%%%%%%%%%
\begin{corollary} \label{cor44}
The graph $G_{N,2}$ is $(N-3)$-regular and has $m=\frac{N(N-3)}{2}$ edges.  

Consequently,
\[
H^{(2)}(u,v)=\frac{\mathrm{vol}(G_{N,2})}{2}\,R^{(2)}(u,v).
\]

In particular,
\[
H^{(2)}(u,v)=\frac{N(N-3)}{2}\,R^{(1)}_{\delta_2(q)}.
\]

Hence, using Theorem \ref{thm41}, we obtain the explicit formula
\[
H^{(2)}(u,v)
=
\frac{N(N-3)}{\Delta(\rho^N+1)}
\left\{
\rho^N-1
+
(-1)^{\delta_2(q)}
\left(
\rho^{\delta_2(q)}-\rho^{N-\delta_2(q)}
\right)
\right\}.
\]
\end{corollary}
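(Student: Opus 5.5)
The statement to prove is Corollary \ref{cor44}. The plan has three steps: check regularity and count edges directly, apply the symmetric-hitting-time formula \eqref{formula01}, and then substitute the resistance expression from Theorem \ref{thm43}.

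\emph{Regularity and edge count.} In $G_{N,2}$ each vertex $i\in\mathbb Z_N$ is adjacent to every vertex except itself and $i\pm2$. Because $N\ge5$ is odd, the vertices $i+2$ and $i-2$ are distinct, since $4\not\equiv 0 \pmod N$. So each vertex has degree $N-3$. Summing degrees gives $\mathrm{vol}(G_{N,2})=N(N-3)$, and halving gives $m=N(N-3)/2$ edges.

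Alternatively, we could invoke the isomorphism $\varphi_2$ with $G_{N,1}$ and then use Corollary \ref{cor42}. The direct count seems cleaner, and it avoids relying on the isomorphism for this elementary fact.

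\emph{Hitting-time formula.} Section \ref{sec02} shows that the reflection $x\mapsto u+v-x$ is an automorphism of every circulant graph $G_{N,w}$, hence of $G_{N,2}$. This gives $H(u,v)=H(v,u)$. Combining this symmetry with the commute time identity (Theorem \ref{thm23}) yields \eqref{formula01}:
\[
H^{(2)}(u,v)=\frac{\mathrm{vol}(G_{N,2})}{2}\,R^{(2)}(u,v)=\frac{N(N-3)}{2}\,R^{(2)}(u,v).
\]
Theorem \ref{thm43} gives $R^{(2)}(u,v)=R^{(1)}_{\delta_2(q)}$, which proves the second displayed identity of the corollary.

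\emph{Explicit formula.} For the last display, insert the closed form of Theorem \ref{thm41} evaluated at distance $\delta_2(q)$. The factor $\frac{N(N-3)}{2}$ cancels the factor $2$ in $\frac{2}{\Delta(\rho^N+1)}$, which leaves the stated prefactor $\frac{N(N-3)}{\Delta(\rho^N+1)}$.

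The only step that needs care is justifying that Theorem \ref{thm41} may be evaluated at $\delta_2(q)$, which is an unoriented distance, rather than at the oriented residue $sq \bmod N$. This is covered by the invariance of the closed form under $q\mapsto N-q$, proved in Theorem \ref{thm41}. Everything else is direct substitution, so no step should present a genuine obstacle.
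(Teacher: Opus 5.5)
Your argument is correct and is essentially the paper's own. The paper gives no separate proof of this corollary; its implicit argument mirrors Corollary~\ref{cor42}: $(N-3)$-regularity gives $\mathrm{vol}(G_{N,2})=N(N-3)$, the reflection automorphism gives \eqref{formula01}, and then Theorems~\ref{thm43} and~\ref{thm41} are substituted. Your direct check that $i\pm2$ are distinct, and your use of the $q\mapsto N-q$ invariance to justify evaluating Theorem~\ref{thm41} at the unoriented $\delta_2(q)$, fill in details the paper leaves tacit, and both are sound.
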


%%%%%%%%%%%%%%%%%%%%%%%%%%%%%%%%%%%%%%%%
\begin{remark}
The reduction of the case $r=2$ to $r=1$ is not merely a consequence
of a permutation of eigenvalues, but follows from a structural
isomorphism of graphs. This explains why all quantities for $r=2$
admit explicit closed forms in terms of those for $r=1$.
\end{remark}

%%%%%%%%%%%%%%%%%%%%%%%%%%%%%%%%%%%%%%%%%%%%%%%%%%%%%%%%%%%%%%%%%%%%%%%%%%%%%%
\section{Asymptotic and structural consequences} \label{sec05}

In this section, we analyze in detail the asymptotic structure and combinatorial consequences derived from the closed forms obtained in Section \ref{sec04}.  
In particular, we evaluate explicitly the limiting behavior of the resistance distance, the Kirchhoff index, and the exponential growth rate of the number of spanning trees.  
Throughout this section, we consider asymptotics as $N\to\infty$ through odd integers,
and all statements involving a fixed distance mean that the parameter is independent of $N$.

By Theorem \ref{thm41},
\[
R^{(1)}(u,v)
=\frac{2}{\Delta(\rho^N+1)}\left\{\rho^N-1+(-1)^{q}(\rho^{q}-\rho^{N-q})\right\},
\]
where
\[
\Delta=\sqrt{N(N-4)},
\qquad
\rho=\frac{N-2+\Delta}{2}.
\]

%%%%%%%%%%%%%%%%%%%%%%%%%%%%%%%%%%%%%%%%
%%%%%%%%%%%%%%%%%%%%%%%%%%%%%%%%%%%%%%%%
\subsection{Asymptotics of $\rho$ and $\Delta$}

We first obtain an asymptotic expansion of $\rho$.  Since
\[
\Delta=\sqrt{N^2-4N}
=
N\sqrt{1-\frac{4}{N}},
\]
applying the binomial expansion
\[
\sqrt{1+x}
=1+\frac{x}{2}-\frac{x^2}{8}+O(x^3)
\]
with $x=-4/N$ yields
\[
\Delta
=
N\left(1-\frac{2}{N}-\frac{2}{N^2}+O(N^{-3})\right).
\]
Here $O(N^{-3})$ denotes terms of order at most $N^{-3}$ as $N\to\infty$.

Consequently,
\[
\rho
=
\frac{N-2+N-2-\frac{2}{N}+O(N^{-2})}{2}
=
N-2-\frac{1}{N}+O(N^{-2}).
\]
In particular,
\[
\rho \sim N-2,
\qquad
\Delta \sim N.
\]
\subsection{Limit of $R^{(1)}(u,v)$ for fixed $q$}

Fix an integer $q \ge 1$ independent of $N$, and let $N\to\infty$.
Then
\[
\rho^{N-q}
=\rho^N \rho^{-q}
=\rho^N\,O(N^{-q}),
\]
and hence
\[
\rho^N-1+(-1)^{q}(\rho^{q}-\rho^{N-q})
=\rho^N\left(1-(-1)^q\rho^{-q}\right)+(-1)^q\rho^q-1.
\]

Since $\rho\to\infty$ as $N\to\infty$, we have
\[
1-(-1)^q\rho^{-q} \to 1.
\]
Thus, the numerator is asymptotic to $\rho^N$ for all fixed $q\ge 1$.
Since $\rho^N+1\sim \rho^N$, we obtain
\[
R^{(1)}(u,v)
\sim
\frac{2}{\Delta}
\sim
\frac{2}{N}.
\]

%%%%%%%%%%%%%%%%%%%%%%%%%%%%%%
%%%%%%%%%%% Theorem 5.1 %%%%%%%
%%%%%%%%%%%%%%%%%%%%%%%%%%%%%%
\begin{theorem}
For each fixed integer $q \ge 1$ independent of $N$,
\[
R^{(1)}(u,v) \sim \frac{2}{N} \qquad (N\to\infty).
\]
\end{theorem}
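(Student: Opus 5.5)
We start from the closed form of Theorem \ref{thm41} and divide numerator and denominator by $\rho^N$. Since $q$ is fixed and $N\ge 2q+1$ eventually, we get
\[
R^{(1)}(u,v)=\frac{2}{\Delta}\cdot\frac{1-\rho^{-N}+(-1)^q\left(\rho^{q-N}-\rho^{-q}\right)}{1+\rho^{-N}} .
\]
The plan is to show that the second factor tends to $1$ and that $\Delta\sim N$. Then the statement follows immediately.

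For the second factor we use the expansion $\rho=N-2-\frac1N+O(N^{-2})$ from the previous subsection. It only requires $\rho>1$ with $\rho\to\infty$; indeed $\rho\ge N-3$ for $N\ge5$, which follows from $\rho+\rho^{-1}=N-2$ and $\rho>1$. This gives $\rho^{-N}\to 0$ and $\rho^{-q}\to0$ for fixed $q\ge1$. The term $\rho^{q-N}$ also tends to $0$, because $q-N\le -q-1<0$ once $N>2q$; more crudely, $\rho^{q-N}\le \rho^{-1}$. The only point needing care is that $q$ is fixed while $N$ grows, so the exponent $N-q$ tends to infinity. We must not treat $\rho^{q}$ and $\rho^{N-q}$ as comparable. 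By symmetry under $q\mapsto N-q$, the same conclusion holds when the fixed quantity is the unoriented distance $h(u,v)$. Hence the numerator of the second factor tends to $1$, and the denominator tends to $1$.

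Finally, $\Delta=\sqrt{N(N-4)}=N\sqrt{1-4/N}\sim N$, so $2/\Delta\sim 2/N$. Combining the two limits gives $R^{(1)}(u,v)\sim 2/N$. As a refinement, the same computation gives the rate explicitly:
\[
R^{(1)}(u,v)=\frac{2}{N}\left(1+\frac{2}{N}-(-1)^q\rho^{-q}+O(N^{-2})\right).
\]
The correction at order $1/N$ comes from $\Delta$ when $q\ge2$, and from both $\Delta$ and the $\rho^{-q}$ term when $q=1$. This refinement is not needed for the statement. No step is a genuine obstacle. The only thing to watch is the uniform bounding of $\rho^{q-N}$, which the inequality $\rho^{q-N}\le\rho^{-1}$ handles.
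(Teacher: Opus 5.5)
Your proposal is correct and is essentially the paper's argument: both start from the closed form of Theorem~\ref{thm41} and observe that numerator and denominator are asymptotic to $\rho^N$. You make this explicit by dividing by $\rho^N$ and using $\rho\ge N-3$ and $\rho^{q-N}\le\rho^{-1}$, so that $R^{(1)}(u,v)\sim 2/\Delta\sim 2/N$; your refinement $\frac{2}{N}\bigl(1+\frac{2}{N}-(-1)^q\rho^{-q}+O(N^{-2})\bigr)$ goes beyond the paper and is also correct.
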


This is the same asymptotic behavior as the resistance distance in the complete graph $K_N$,
\[
R_{K_N}(u,v)=\frac{2}{N}.
\]

%%%%%%%%%%%%%%%%%%%%%%%%%%%%%%
%%%%%%%%%%%%%%%%%%%%%%%%%%%%%%
\subsection{Kirchhoff index}

We use the standard spectral identity
\[
Kf(G)=N\sum_{j=1}^{N-1}\frac{1}{\lambda_j}.
\]

Applying this to $G_{N,1}$, we obtain
\[
Kf(G_{N,1})
=
N\sum_{j=1}^{N-1}\frac{1}{N-2+2\cos\left(\frac{2\pi j}{N}\right)}.
\]

Alternatively, using pairwise distances,
\[
Kf(G)=\frac{1}{2}\sum_{u,v}R(u,v),
\]
and for circulant graphs,
\[
Kf(G_{N,1})
=
\frac{N}{2}\sum_{q=1}^{N-1}R^{(1)}(q),
\]
where $q=v-u \pmod N$ is the oriented residue.
The factor $N/2$ accounts for double counting of unordered pairs.

Substituting Theorem \ref{thm41}, we obtain
\[
Kf(G_{N,1})
=\frac{N}{\Delta(\rho^N+1)}\sum_{q=1}^{N-1}\left\{\rho^N-1+(-1)^{q}(\rho^{q}-\rho^{N-q})\right\}.
\]

Let
\[
S(N,\rho)
:=\sum_{q=1}^{N-1}\left\{\rho^N-1+(-1)^{q}(\rho^{q}-\rho^{N-q})\right\}.
\]

Then
\[
S(N,\rho)
=
\sum_{q=1}^{N-1}(\rho^N-1)
+\sum_{q=1}^{N-1}(-1)^{q}\rho^{q}
-\sum_{q=1}^{N-1}(-1)^{q}\rho^{N-q}.
\]

First,
\[
\sum_{q=1}^{N-1}(\rho^N-1)=(N-1)(\rho^N-1).
\]

Next, by the geometric series,
\[
\sum_{q=1}^{N-1}(-1)^{q}\rho^{q}
=
\sum_{q=1}^{N-1}(-\rho)^{q}
=\frac{(-\rho)\{1-(-\rho)^{N-1}\}}{1-(-\rho)}
=-\frac{\rho\{1-(-\rho)^{N-1}\}}{1+\rho}.
\]

Since $N$ is odd, $N-1$ is even, and hence $(-\rho)^{N-1}=\rho^{N-1}$. Therefore,
\begin{equation}\label{eq:sum1}
\sum_{q=1}^{N-1}(-1)^{q}\rho^{q}
=
-\frac{\rho(1-\rho^{N-1})}{1+\rho}
=
\frac{\rho(\rho^{N-1}-1)}{1+\rho}
=
\frac{\rho^N-\rho}{1+\rho}.
\end{equation}

Similarly,
\[
\sum_{q=1}^{N-1}(-1)^{q}\rho^{N-q}
=
\rho^N\sum_{q=1}^{N-1}(-1)^{q}\rho^{-q}
=
\rho^N\sum_{q=1}^{N-1}(-\rho^{-1})^{q}.
\]

Again by the geometric series,
\[
\sum_{q=1}^{N-1}(-\rho^{-1})^q
=\frac{(-\rho^{-1})\{1-(-\rho^{-1})^{N-1}\}}{1+\rho^{-1}}.
\]

Since $N-1$ is even, $(-\rho^{-1})^{N-1}=\rho^{-(N-1)}$, and using $1+\rho^{-1}=(\rho+1)/\rho$ we obtain
\[
\sum_{q=1}^{N-1}(-\rho^{-1})^{q}
=
\frac{(-\rho^{-1})(1-\rho^{-(N-1)})}{(\rho+1)/\rho}
=-\frac{1-\rho^{-(N-1)}}{\rho+1}.
\]

Thus,
\begin{equation}\label{eq:sum2}
\sum_{q=1}^{N-1}(-1)^{q}\rho^{N-q}
=
\rho^N\left(-\frac{1-\rho^{-(N-1)}}{\rho+1}\right)
=-\frac{\rho^N-\rho}{\rho+1}.
\end{equation}

Using \eqref{eq:sum1} and \eqref{eq:sum2}, we have
\[
\sum_{q=1}^{N-1}(-1)^{q}(\rho^{q}-\rho^{N-q})
=
\frac{\rho^N-\rho}{\rho+1}
-\left(-\frac{\rho^N-\rho}{\rho+1}\right)
=
\frac{2(\rho^N-\rho)}{\rho+1}.
\]

Hence
\[
S(N,\rho)
=
(N-1)(\rho^N-1)+\frac{2(\rho^N-\rho)}{\rho+1}.
\]

Substituting this into $Kf(G_{N,1})=\dfrac{N}{\Delta(\rho^N+1)}S(N,\rho)$ yields the closed form
\begin{equation}\label{eq:Kf-closed}
Kf(G_{N,1})
=
\frac{N}{\Delta(\rho^N+1)}
\left\{(N-1)(\rho^N-1)+\frac{2(\rho^N-\rho)}{\rho+1}\right\}.
\end{equation}

We now derive the asymptotics of $Kf(G_{N,1})$.  
Since $\rho>1$, the term $\rho^{-N}$ is exponentially small, and hence
\[
\frac{\rho^N-1}{\rho^N+1}=1+O(\rho^{-N}),
\qquad
\frac{\rho^N-\rho}{\rho^N+1}=1+O(\rho^{-1}).
\]

Therefore, from \eqref{eq:Kf-closed},
\[
Kf(G_{N,1})
=\frac{N}{\Delta}\left((N-1)+\frac{2}{\rho+1}\right)+O\!\left(\frac{N}{\Delta}\rho^{-N}\right).
\]

Since $\rho\sim N$ and $\Delta\sim N$, we have
\[
\frac{N}{\Delta}\cdot\frac{2}{\rho+1}=O(N^{-1}),
\]
and hence
\[
Kf(G_{N,1})
=
\frac{N(N-1)}{\Delta}+O(N^{-1}).
\]

Thus we obtain the following.

%%%%%%%%%%%%%%%%%%%%%%%%%%%%%%
%%%%%%%%%%% Theorem 5.2 %%%%%%%
%%%%%%%%%%%%%%%%%%%%%%%%%%%%%%
\begin{theorem}
\[
Kf(G_{N,1})
\sim
\frac{N^2}{\Delta}
\sim
\frac{N^2}{N}
=N
\qquad (N\to\infty).
\]
\end{theorem}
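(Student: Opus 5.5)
The plan is to start from the closed form \eqref{eq:Kf-closed} rather than from the spectral sum $N\sum_j 1/\lambda_j$, since all the exact algebra has already been done there. I would write
\[
Kf(G_{N,1})=\frac{N}{\Delta}\left\{(N-1)\frac{\rho^N-1}{\rho^N+1}+\frac{2}{\rho+1}\cdot\frac{\rho^N-\rho}{\rho^N+1}\right\}
\]
and bound each factor separately. By the expansion of $\rho$ in the first subsection, $\rho=N-2-1/N+O(N^{-2})$, so $\rho\ge 2$ for all odd $N\ge 5$, and $\rho\to\infty$. Hence $\rho^{-N}\le 2^{-N}$. This gives
\[
\frac{\rho^N-1}{\rho^N+1}=1+O(2^{-N}),
\]
and
\[
0\le\frac{\rho^N-\rho}{\rho^N+1}\le 1.
\]

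Using these bounds, the bracket equals $(N-1)+O(N2^{-N})+O(1/\rho)$, which is $N-1+O(N^{-1})$. Multiplying by $N/\Delta$ gives
\[
Kf(G_{N,1})=\frac{N(N-1)}{\Delta}+O\!\left(\frac{N}{\Delta}\cdot\frac1N\right).
\]
This reproduces the displayed estimate, since $N/\Delta\to1$.

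Next, since $\Delta=N\sqrt{1-4/N}$ and $\sqrt{1-4/N}\to1$, we have $\Delta\sim N$. Therefore $N(N-1)/\Delta\sim N^2/\Delta\sim N$, because $(N-1)/N\to1$. The error term is $O(N^{-1})=o(N)$, so it does not affect the asymptotic relation. This gives $Kf(G_{N,1})\sim N^2/\Delta\sim N$. As a sanity check, the same result follows directly from the spectral formula: every $\lambda_j$ lies in $[N-4,N]$, so
\[
\frac{N(N-1)}{N}\le Kf(G_{N,1})\le\frac{N(N-1)}{N-4},
\]
and both bounds are asymptotic to $N$. I would include this squeeze as an independent confirmation that needs no closed form.

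There is no real obstacle here. The only point requiring care is that the $O(\cdot)$ terms are uniform as $N\to\infty$ through odd integers, which is exactly what the bound $\rho\ge2$ secures. The eigenvalue squeeze is, in fact, the cleanest complete proof, and I would present it as the main argument, with the closed-form route recorded as the sharper statement $Kf=N(N-1)/\Delta+O(N^{-1})$.
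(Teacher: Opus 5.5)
Your closed-form route is essentially the paper's argument: the paper also starts from \eqref{eq:Kf-closed}, shows that the two ratios tend to $1$, obtains $Kf(G_{N,1})=N(N-1)/\Delta+O(N^{-1})$, and finishes with $\Delta\sim N$. Your version only makes the uniformity explicit. One small fix: $\rho\ge 2$ for \emph{all} odd $N\ge5$ does not follow from an expansion with an unspecified $O(N^{-2})$; use $\Delta>N-4$ instead, which gives $\rho>N-3\ge2$.

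The eigenvalue squeeze you want to make the main proof is a genuinely different and more elementary route.
\begin{itemize}
\item It needs only Lemma~\ref{lem41} and $Kf=N\sum_j\lambda_j^{-1}$, so it bypasses Theorem~\ref{thm41} and the evaluation of $S(N,\rho)$.
\item It gives the explicit bounds $N-1\le Kf(G_{N,1})\le N(N-1)/(N-4)$ for every odd $N\ge5$, not just asymptotically.
\item It extends verbatim to any fixed deletion set $S$: for odd $N$, the formula in Section~\ref{sec03} gives $N-4|S|\le\lambda_j\le N$, so $Kf(G_{N,S})\sim N$. The closed-form method cannot reach this case.
\end{itemize}

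What the paper's route buys in return is the next-order term. Since $N(N-1)/\Delta=(N-1)(1-4/N)^{-1/2}=N+1+O(N^{-1})$, it yields $Kf(G_{N,1})=N+1+O(N^{-1})$. The squeeze only confines $Kf(G_{N,1})$ to the window $[N-1,\,N+3+12/(N-4)]$.
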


For the complete graph, $Kf(K_N)=N-1$, and hence the Kirchhoff index remains of the same order.

%%%%%%%%%%%%%%%%%%%%%%%%%%%%%%
%%%%%%%%%%%%%%%%%%%%%%%%%%%%%%
\subsection{Asymptotics of the number of spanning trees}

By Theorem \ref{thm42},
\[
\tau(G_{N,1})
=\frac{1}{N}\frac{(\rho^N+1)^2}{\rho^{N-1}(\rho+1)^2}.
\]
Using $\rho\sim N-2$, we obtain
\[
\tau(G_{N,1})
\sim
\frac{\rho^{2N}}{N\,\rho^{N-1}\rho^2}
=
\frac{\rho^{N-1}}{N},
\]
and hence
\[
\tau(G_{N,1})
\sim
\frac{(N-2)^{N-1}}{N}.
\]
On the other hand, by Cayley's formula,
\[
\tau(K_N)=N^{N-2}.
\]
Taking the ratio gives
\[
\frac{\tau(G_{N,1})}{\tau(K_N)}
\sim
\frac{(N-2)^{N-1}}{N^{N-1}}
=
\left(1-\frac{2}{N}\right)^{N-1}.
\]
Therefore we obtain:

%%%%%%%%%%%%%%%%%%%%%%%%%%%%%%
%%%%%%%%%%% Theorem 5.3 %%%%%%%
%%%%%%%%%%%%%%%%%%%%%%%%%%%%%%
\begin{theorem}
\[
\lim_{N\to\infty}
\frac{\tau(G_{N,1})}{\tau(K_N)}
=
e^{-2}.
\]
\end{theorem}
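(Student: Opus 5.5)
We start from the exact closed form of Theorem \ref{thm42} and avoid the heuristic ``$\sim$'' manipulations. Dividing by Cayley's formula $\tau(K_N)=N^{N-2}$ gives
\[
\frac{\tau(G_{N,1})}{\tau(K_N)}
=\frac{(\rho^N+1)^2}{\rho^{N-1}(\rho+1)^2\,N^{N-1}}
=\underbrace{\Bigl(\frac{1+\rho^{-N}}{1+\rho^{-1}}\Bigr)^{2}}_{=:A_N}\cdot\underbrace{\Bigl(\frac{\rho}{N}\Bigr)^{N-1}}_{=:B_N}.
\]
This factorization uses $(\rho^N+1)^2/(\rho^{N-1}(\rho+1)^2)=\rho^{N-1}\,(1+\rho^{-N})^2/(1+\rho^{-1})^2$. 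We then handle the two factors separately and multiply the limits.

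For $A_N$, the definition \eqref{eq:rhoDelta} gives $\rho\ge (N-2)/2\to\infty$. Hence $\rho^{-1}\to0$ and $\rho^{-N}\to0$, so $A_N\to1$.

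The main step is $B_N\to e^{-2}$. We take logarithms: $\log B_N=(N-1)\log(\rho/N)$. The expansion from the subsection on asymptotics of $\rho$ and $\Delta$ gives $\rho=N-2-\frac1N+O(N^{-2})$, so
\[
\frac{\rho}{N}=1-\frac{2}{N}+O(N^{-2}).
\]
Using $\log(1+x)=x+O(x^2)$, this yields
\[
\log\frac{\rho}{N}=-\frac2N+O(N^{-2}),
\]
and therefore
\[
(N-1)\log\frac{\rho}{N}=-2+O(N^{-1})\to-2.
\]
So $B_N\to e^{-2}$.

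The only delicate point is that the error term in $\rho$ must be $o(1/N)$ relative to $N$, i.e.\ $\rho/N=1-2/N+o(1/N)$. An error of order $1/N$ in $\rho/N$ would change the limit, so the crude statement $\rho\sim N-2$ does not suffice. To make this rigorous without series, I would use exact bounds. From $\rho+\rho^{-1}=N-2$ in \eqref{eq:rho-id} we get $\rho=N-2-\rho^{-1}$. Since $\rho>1$, this gives $N-3<\rho<N-2$, and then $N-2-\frac{1}{N-3}<\rho<N-2$. Consequently
\[
\Bigl(1-\frac2N-\frac{1}{N(N-3)}\Bigr)^{N-1}<B_N<\Bigl(1-\frac2N\Bigr)^{N-1}.
\]
Both bounds tend to $e^{-2}$ by the standard limit $(1+a/N+o(1/N))^N\to e^{a}$. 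Combining the two factors gives $\tau(G_{N,1})/\tau(K_N)=A_NB_N\to e^{-2}$.
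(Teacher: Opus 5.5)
Your proof is correct and follows the paper's route: start from the closed form of Theorem~\ref{thm42}, divide by Cayley's formula, and reduce to $(1-2/N)^{N-1}\to e^{-2}$. Your sandwich $N-2-\frac{1}{N-3}<\rho<N-2$, obtained from $\rho=N-2-\rho^{-1}$, actually improves on the paper: it justifies replacing $\rho^{N-1}$ by $(N-2)^{N-1}$, which the paper's appeal to $\rho\sim N-2$ alone does not.
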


In other words, deleting all distance-$1$ edges decreases the number of spanning trees by the exponential factor $e^{-2}$.

\medskip
In summary, the distance-$1$ deletion graph $G_{N,1}$ has the same local order of resistance as the complete graph.  
At the same time, the number of spanning trees decays exponentially by a factor $e^{-2}$, while the Kirchhoff index grows linearly in $N$.

%%%%%%%%%%%%%%%%%%%%%%%%%%%%%%%%%%%%%%%%%%%%%%%%%%%%
\section{Conclusion} \label{sec06}
%%%%%%%%%%%%%%%%%%%%%%%%%%%%%%%%%%%%%%%%%%%%%%%%%%%%

In this paper, we studied a family of circulant graphs obtained from the complete graph $K_N$ by deleting all edges belonging to a prescribed cyclic distance class. 
We derived, in a unified manner, formulas for the effective resistance, the expected hitting time, the number of spanning trees, and the number of two-component spanning forests, based on the Laplacian eigenvalues. 
For general distance-class deletions, these quantities admit natural spectral representations, but one should not expect readable closed forms in general.

In particular, we clarified that for a single deleted distance class $r$ with $\gcd(r,N)=1$, the graph $G_{N,r}$ is isomorphic to $G_{N,1}$. 
Thus, all spectral and combinatorial quantities in this setting reduce to the case $r=1$.

On the other hand, when the deleted distance class is $r = 1$ (and hence also for all $r$ with $\gcd(r,N)=1$), and the number of vertices $N$ is assumed to be odd, we showed that remarkably simple closed forms hold. 
For the distance-1 deletion, the eigenvalue denominator becomes a linear cosine expression, which reduces to a quadratic polynomial under the complex-variable substitution. 
This yields explicit exponential-type formulas for the resistance distance and for the spanning tree count.

We emphasize that the resistance formula itself is closely related to earlier work of Chair \cite{Ref09}, where exact expressions for the complete graph minus a Hamiltonian cycle were derived. 
The contribution of the present paper lies in providing a unified spectral derivation and in extending the analysis to include explicit formulas for spanning trees, two-component spanning forests, and hitting times, as well as their asymptotic behavior.

Furthermore, the case $r=2$ is not a separate phenomenon but a special instance of this general isomorphism. 
Indeed, the map $x \mapsto 2^{-1}x \pmod N$ induces a graph isomorphism $G_{N,2} \cong G_{N,1}$ when $N$ is odd, and therefore both the effective resistance and the spanning tree count reduce completely to those of the case $r=1$. 
This shows that the coincidence of spectra is a consequence of a structural equivalence rather than a mere permutation of eigenvalues.

We also performed an asymptotic analysis. 
For each fixed distance $h=h(u,v)\ge 1$ independent of $N$, the resistance distance has the same order $2/N$ as in $K_N$, whereas the number of spanning trees decays exponentially by the factor $e^{-2}$. 
In addition, we showed that the Kirchhoff index remains of linear order in $N$.

These results demonstrate that the simple operation of deleting a distance class produces precise and quantitative changes in the spectral structure and in combinatorial invariants of highly symmetric graphs.

At the same time, we note that for more general distance sets $S$, especially when $\gcd(r,N)>1$ or when multiple distance classes are removed, the structure may differ substantially, and the present analysis does not yield comparable closed forms. 
Thus, the general theory remains at the level of spectral representations in such cases.

Possible directions for future work include the analysis of the eigenvalue structure when multiple distance classes are deleted simultaneously, as well as a systematic understanding of the symmetry-breaking phenomena that appear for even $N$. 
It is also an interesting problem to investigate whether similar closed forms can be obtained for other highly symmetric graph families.

%%%%%%%%%%%%%%%%%%%%%%%%%%%%%%
%%%%%%%%%%%%%%%%%%%%%%%%%%%%%%
\section*{Acknowledgments}
The author would like to thank the anonymous referee for their careful reading and valuable comments, which significantly improved the quality of this paper.

%%%%%%%%%%%%%%%%%%%%%%%%%%%%%%
%%%%%%%%%%%%%%%%%%%%%%%%%%%%%%

\end{document}